\def\thm@space@setup{%
  \thm@preskip=1cm plus .5cm minus .5cm
  \thm@postskip=.5cm plus .6cm minus .5cm 
}
\newtheorem{thm}{Theorem}
\newtheorem{fact}{Fact}
\newtheorem{lma}{Lemma}
\newtheorem{rmk}{Remark}
\newtheorem*{ex}{Examples}
\numberwithin{thm}{section}
\numberwithin{lma}{section}
\numberwithin{dfn}{section}
\numberwithin{cor}{section}
\numberwithin{rmk}{section}
\numberwithin{prop}{section}
\newcommand*{\thmref}[1]{Theorem~\ref{#1}}
\newcommand*{\lmaref}[1]{Lemma~\ref{#1}}
\newcommand*{\rmkref}[1]{Remark~\ref{#1}}
\title{A subset generalization of the Erd\H{o}s-Kac theorem over number fields with applications}
\date{}
\begin{document}





  



\author[1]{Sourabhashis Das}
\affil[1]{Corresponding author, Department of Pure Mathematics, University of Waterloo, 200 University Avenue West, Waterloo, Ontario, Canada, N2L 3G1, \textit{s57das@uwaterloo.ca}.}
%
%
\author[2]{Wentang Kuo}
\affil[2]{Department of Pure Mathematics, University of Waterloo, 200 University Avenue West, Waterloo, Ontario, Canada, N2L 3G1, \textit{wtkuo@uwaterloo.ca}.}
\author[3]{Yu-Ru Liu}
\affil[3]{Department of Pure Mathematics, University of Waterloo, 200 University Avenue West, Waterloo, Ontario, Canada, N2L 3G1, \textit{yrliu@uwaterloo.ca}.}

\maketitle 

\begin{abstract}
Let $\omega(n)$ denote the number of distinct prime factors of a natural number $n$. In \cite{ErdosKac}, Erd\H{o}s and Kac established that $\omega(n)$ obeys the Gaussian distribution over natural numbers. In \cite{liu}, the third author generalized their theorem to all abelian monoids. In this work, we extend the work of \cite{liu} to any subset of the set of ideals of a number field satisfying some additional conditions. Finally, we apply this theorem to prove the Erd\H{o}s-Kac theorem over $h$-free and over $h$-full ideals of the number field.  
\end{abstract}

\section{Introduction}
For\footnotetext{\textbf{2020 Mathematics Subject Classification: 11R04, 11R44, 11R45.}}\footnotetext{\textbf{Keywords: number fields, prime ideals, the $\omega$-function, Erd\H{o}s-Kac theorem.}}\footnotetext{The research of W. Kuo and Y.-R. Liu is supported by the Natural Sciences and Engineering Research Council of Canada (NSERC) discovery grants.} $n \in \mathbb{N}$, let $\omega(n)$ count the number of distinct prime divisors of $n$. In \cite{hardyram}, Hardy and Ramanujan established that $\omega(n)$ takes the value $\log \log n$ almost all the time by proving that $\omega(n)$ has normal order $\log \log n$ over natural numbers. This result was followed up by the work of Erd\H{o}s and Kac \cite{ErdosKac} who used probabilistic means to establish that $\omega(n)$ obeys the Gaussian distribution with mean $\log \log n$ over naturals. 
In particular, for any $\gamma \in \mathbb{R}$, they proved
$$\lim_{x \rightarrow \infty} \frac{1}{x}  \left| \left\{ n \leq x \ : \ n \geq 3, \ \frac{\omega(n) - \log \log n}{\sqrt{\log \log n}} \leq \gamma \right\} \right| = \Phi(\gamma),$$
where 
\begin{equation}\label{phi(a)}
    \Phi(\gamma) = \frac{1}{\sqrt{2 \pi}} \int_{-\infty}^\gamma e^{-u^2/2} \ du,
\end{equation}
and where $|S|$ denotes the cardinality of the set $S$. Following their work, various approaches to the Erd\H{o}s-Kac theorem have been pursued, for example, see \cite{pb}, \cite{lg}, \cite{gs}, \cite{hh1}, \cite{hh2}, \cite{hh3}, and \cite{jala2}. Recently, in \cite{mmp}, Murty, Murty, and Pujahari proved an all-purpose Erd\H{o}s-Kac theorem which applies to diverse settings. Moreover, the third author in \cite{liu} provided a generalization of the Erd\H{o}s-Kac theorem over any abelian monoid. We extend this work to provide another generalization of the Erd\H{o}s-Kac theorem over any subset of the set of ideals of a number field satisfying some additional conditions. 

Let $K/\mathbb{Q}$ be a number field of degree $n_K = [K: \mathbb{Q}]$ and $\mathcal{O}_K$ be its ring of integers. Let $\mathcal{P}$ be the set of prime ideals of $\mathcal{O}_K$ and $\mathcal{M}$ be the set of ideals of $\mathcal{O}_K$. Let $N: \mathcal{M} \rightarrow \mathbb{N}$ be the standard norm map, i.e., $\mathfrak{m} \mapsto N(\mathfrak{m}) := |\mathcal{O}_K/ \mathfrak{m}|$.


Let $\mathcal{S}$ be a subset of infinitely many elements of $\mathcal{M}$. For $x >1$, we define
$$\mathcal{S}(x) = \{ \mathfrak{m} \in \mathcal{S} \ : \ N(\mathfrak{m}) \leq x \}.$$
We assume that $\mathcal{S}$ satisfies the following condition:
\begin{equation}\label{Scondition}
    |\mathcal{S}(x^{1/2})| = o(|\mathcal{S}(x)|), \quad \text{for all } x > 1.
\end{equation}

For each $\wp \in \mathcal{P}$, we write
\begin{equation}\label{eq_lambdap}
    \frac{1}{|\mathcal{S}(x)|} \left| \{ \mathfrak{m} \in \mathcal{S}(x) \ : \ \wp | \mathfrak{m} \} \right| = \lambda_\wp+ e_\wp(x),
\end{equation}
where $\lambda_\wp$ denotes the main term that is independent of $x$ and $e_\wp(x)$ is the error term. In the following, we will use $e_{\wp}$ to abbreviate $e_{\wp}(x)$. For any sequence of distinct elements $\wp_{\scaleto{1}{4pt}}, \wp_{\scaleto{2}{4pt}}, \cdots, \wp_{\scaleto{u}{4pt}} \in \mathcal{P}$, we write  
$$\frac{1}{|\mathcal{S}(x)|} \left| \{ \mathfrak{m} \in \mathcal{S}(x) \ : \ \wp_i | \mathfrak{m} \text{ for all } i = 1, \cdots, u \} \right|= \lambda_{\wp_{\scaleto{1}{3pt}}}\cdots\lambda_{\wp_{\scaleto{u}{3pt}}} + e_{\wp_{\scaleto{1}{3pt}}\cdots \wp_{\scaleto{u}{3pt}}}(x).$$
In the following, we will use $e_{\wp_{\scaleto{1}{3pt}} \cdots \wp_{\scaleto{u}{3pt}}}$ to abbreviate $e_{\wp_{\scaleto{1}{3pt}} \cdots \wp_{\scaleto{u}{3pt}}}(x)$.


Suppose there exists a $\beta$ with $0 < \beta \leq 1$ and $y = y(x) < x^\beta$ such that the following conditions are satisfied:
   \begin{enumerate}
       \item[(a)] $\left|\{ \wp \in \mathcal{P} \ : \ N(\wp) > x^\beta, \ \wp| \mathfrak{m} \} \right| = O_\beta(1)$ for each $\mathfrak{m} \in \mathcal{S}(x)$. Here, $O_{Y}$ denotes that the big-O constant depends on the variable set $Y$. 
       \item[(b)] $\sum_{y < N(\wp) \leq x^\beta} \lambda_\wp = o \left( (\log \log x)^{1/2} \right)$.
       \item[(c)] $\sum_{y < N(\wp) \leq x^\beta} |e_\wp| = o \left( (\log \log x)^{1/2} \right)$.
       \item[(d)] $\sum_{N(\wp) \leq y} \lambda_\wp = \log \log x + o \left( (\log \log x)^{1/2} \right)$.
       \item[(e)] $\sum_{N(\wp) \leq y} \lambda_\wp^2 = o \left( (\log \log x)^{1/2} \right)$.
       \item[(f)] For $r \in \mathbb{N}$, let $u$ be any integer picked from $\{ 1, 2, \cdots, r \}$. We have
       $$\sum{\vphantom{\sum}}'' |e_{\wp_{\scaleto{1}{3pt}} \cdots \wp_{\scaleto{u}{3pt}}}| = o \left( (\log \log x)^{-r/2} \right), $$
       where $\sum{\vphantom{\sum}}''$ extends over all $u$-tuples $(\wp_{\scaleto{1}{4pt}},\wp_{\scaleto{2}{4pt}},\cdots,\wp_{\scaleto{u}{4pt}})$ with $N(\wp_i) \leq y$ for all $i \in \{ 1, 2, \cdots, u\}$ and all $\wp_i'$s are distinct.
   \end{enumerate}
For $\mathfrak{m} \in \mathcal{M}$, let $\omega(\mathfrak{m})$ count the number of distinct prime ideals dividing $\mathfrak{m}$. Thus, we can write
$$\omega(\mathfrak{m}) = \sum_{\substack{\wp \in \mathcal{P} \\ \wp | \mathfrak{m}}} 1.$$
Using this definition and the above conditions, we obtain the following subset generalization of the Erd\H{o}s-Kac theorem over number fields:
\begin{thm}\label{yurugen}
Let $\mathcal{S}$ be a subset of $\mathcal{M}$. For any $x > 1$, let $\mathcal{S}(x)$ be the set of elements in $\mathcal{S}$ with norm $N(\cdot)$ less than or equal to $x$. Assume that $\mathcal{S}$ satisfies the condition \eqref{Scondition}. Suppose there exists a $\beta$ with $0 < \beta \leq 1$ and $y = y(x) < x^\beta$ such that the conditions \textnormal{(a)} to \textnormal{(f)} above hold. 
Then for $\gamma \in \mathbb{R}$, we have
$$\lim_{x \rightarrow \infty} \frac{1}{|\mathcal{S}(x)|} \left| \left\{ \mathfrak{m} \in \mathcal{S}(x) \ : \ N(\mathfrak{m}) \geq 3, \ \frac{\omega(\mathfrak{m}) - \log \log N(\mathfrak{m})}{\sqrt{\log \log N(\mathfrak{m})}} \leq \gamma\right\} \right| = \Phi(\gamma),$$
where $\Phi(\gamma)$ is defined in \eqref{phi(a)}.
\end{thm}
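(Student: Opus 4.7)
My approach is the method of moments, adapted from the abelian-monoid argument of \cite{liu} to the present subset setting. By the Fréchet--Shohat theorem, and since the standard normal distribution is determined by its moments, it suffices to prove that for every integer $k \geq 0$,
\begin{equation*}
\frac{1}{|\mathcal{S}(x)|} \sum_{\mathfrak{m} \in \mathcal{S}(x)} \left( \frac{\omega(\mathfrak{m}) - \log\log N(\mathfrak{m})}{\sqrt{\log\log N(\mathfrak{m})}} \right)^{\!k} \longrightarrow M_k \qquad (x \to \infty),
\end{equation*}
where $M_{2j} = (2j-1)!!$ and $M_{2j+1} = 0$ are the moments of $\Phi$.

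I would first simplify the normalization. Setting $L := \log\log x$, condition \eqref{Scondition} implies that the ideals $\mathfrak{m} \in \mathcal{S}(x)$ with $N(\mathfrak{m}) \leq \sqrt{x}$ contribute negligibly, and on the bulk $\log\log N(\mathfrak{m}) = L(1 + o(1))$; a standard Slutsky-type argument then lets me replace $\log\log N(\mathfrak{m})$ by $L$ in both numerator and denominator. Next I truncate: let $\omega_y(\mathfrak{m}) := \#\{\wp : N(\wp) \leq y,\ \wp \mid \mathfrak{m}\}$. Condition (a) bounds the number of prime divisors of $\mathfrak{m}$ with norm exceeding $x^\beta$ by $O_\beta(1)$, and for primes in the intermediate range $(y, x^\beta]$, averaging via \eqref{eq_lambdap} together with (b) and (c) yields
\begin{equation*}
\frac{1}{|\mathcal{S}(x)|} \sum_{\mathfrak{m} \in \mathcal{S}(x)} \bigl(\omega(\mathfrak{m}) - \omega_y(\mathfrak{m})\bigr) \;=\; o(L^{1/2}).
\end{equation*}
A higher-moment analogue of this estimate, produced by expanding $(\omega - \omega_y)^k$ and bounding the resulting prime-tuple averages via (b), (c), and (f), gives $L^k$-control; hence it is enough to prove Gaussian moment convergence for $V(\mathfrak{m}) := (\omega_y(\mathfrak{m}) - L)/\sqrt{L}$.

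The core moment computation proceeds as follows. Using (d), I write
\begin{equation*}
\omega_y(\mathfrak{m}) - L \;=\; \sum_{N(\wp) \leq y} \bigl(\mathbf{1}_{\wp \mid \mathfrak{m}} - \lambda_\wp\bigr) \;+\; o(L^{1/2}),
\end{equation*}
and expand the $k$-th power. Averaging over $\mathfrak{m} \in \mathcal{S}(x)$ produces a sum over $k$-tuples $(\wp_1, \ldots, \wp_k)$ of primes of norm at most $y$. I group these tuples according to the set partition $\pi = \{B_1, \ldots, B_u\}$ of $\{1, \ldots, k\}$ induced by equalities among the $\wp_i$. For each $\pi$, the joint expectation factors blockwise up to errors captured by $e_{\tilde{\wp}_1 \cdots \tilde{\wp}_u}$, which are annihilated by (f) with $r = k$. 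A direct calculation shows that singleton blocks contribute zero to the main term; a block of size $m \geq 2$ contributes $\lambda_{\tilde{\wp}_j}(1 + O(\lambda_{\tilde{\wp}_j}))$; and summing over distinct choices of the $\tilde{\wp}_j$ yields $L^u + o(L^u)$ by (d), with overlap corrections of lower order controlled by (e). The partition $\pi$ therefore contributes $L^{u - k/2}(1 + o(1))$, which tends to zero unless $u = k/2$, that is, unless $\pi$ is a perfect matching of $\{1, \ldots, k\}$. For odd $k$ no such matching exists, giving limit $0 = M_k$; for even $k$ the $(k-1)!!$ perfect matchings each contribute $1$, yielding $(k-1)!! = M_k$.

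The main obstacle will be the combinatorial bookkeeping of the last step: organizing the set-partition expansion, verifying that non-matching partitions are suppressed by the factor $L^{u - k/2}$, correctly handling overlap in the distinct-prime sum via (e), and routing each residual error sum to exactly one of the hypotheses (b)--(f). Once this is carried out carefully, paralleling the argument of \cite{liu} in the abelian-monoid setting, Fréchet--Shohat delivers the theorem.
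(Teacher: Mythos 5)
Your proposal is correct in outline but follows a genuinely different route from the paper at the final, decisive step. Your preliminary reductions match the paper's: replacing $\log\log N(\mathfrak{m})$ by $\log\log x$ via \eqref{Scondition} and Fact~\ref{fact2} (the paper's \lmaref{linkN(m)x}), truncating $\omega$ to $\omega_y$ via conditions (a)--(c) (the paper's \lmaref{linkomegay}), and invoking the moment method (Fact~\ref{fact3}). Where you diverge is in proving moment convergence for $V(\mathfrak{m}) = (\omega_y(\mathfrak{m}) - L)/\sqrt{L}$. You propose a direct Granville--Soundararajan--style computation: expand $V^k$ as a $k$-fold sum over primes of norm at most $y$, classify $k$-tuples by the set partition induced by coincidences, show singleton blocks vanish, blocks of size $\geq 2$ contribute $\lambda_{\wp}(1 + O(\lambda_\wp))$, and deduce that only perfect matchings survive the normalization by $L^{k/2}$, giving $(k-1)!!$. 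The paper instead introduces the independent Bernoulli model $\mathcal{S}_y = \sum_{N(\wp)\leq y} X_\wp$ with $P(X_\wp = 1) = \lambda_\wp$, proves via condition (f) and a multinomial expansion that the normalized moments of $\omega_y$ under $P_{\mathcal{S},x}$ agree asymptotically with those of $\mathcal{S}_y$ (\lmaref{rthmoment}), and then gets the Gaussian moments of $\mathcal{S}_y$ \emph{for free} from the Central Limit Theorem (Fact~\ref{fact5}) together with a uniform integrability bound (\lmaref{finite_r-thmoment} plus Fact~\ref{fact4}). Your approach avoids the CLT entirely and is more self-contained, at the cost of the set-partition bookkeeping, which you flag as the main burden; in particular you must route the error from expanding $E_{\mathcal{S},x}\bigl[\prod_j (\mathbf{1}_{\tilde\wp_j\mid\mathfrak{m}} - \lambda_{\tilde\wp_j})^{m_j}\bigr]$ blockwise through condition (f) for each subset of the block representatives, and control overlap corrections in the distinct-tuple sums via (e). The paper's route localizes all that combinatorics to a single moment-comparison lemma and lets standard probability theory do the rest. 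Both are sound; neither is obviously simpler.
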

We list some well-studied applications of this general setting.
\begin{ex}
Using $\mathcal{S} = \mathcal{M}$, we recover the applications of the generalized Erd\H{o}s-Kac theorem to the number fields mentioned in \cite[Examples 1-2]{liu}. In particular, let $\mathcal{S}=\mathcal{M}=\mathbb{N}$ with the norm $N$ being the identity map. Then by \thmref{yurugen}, we recover the classical Erd\H{o}s-Kac theorem. 
\end{ex}
In the following part, we discuss two other instances where \thmref{yurugen} can be applied on proper subsets of $\mathcal{M}$, in particular, the cases of $h$-free ideals and $h$-full ideals. We begin with some definitions and some distribution results concerning the $h$-free and the $h$-full ideals of a number field $K$.

For an ideal $\mathfrak{m} \in \mathcal{M}$, let the prime ideal factorization of $\mathfrak{m}$ be given as
\begin{equation}\label{factorization}
\mathfrak{m} = \wp_{\scaleto{1}{3pt}}^{s_1} \cdots \wp_r^{s_r},
\end{equation}
where $\wp_i'$s are its distinct prime ideal factors and $s_i'$s are their respective multiplicities. Here, $\omega(\mathfrak{m}) = r$. Let $h \geq 2$ be an integer. We say $\mathfrak{m}$ is $h$\textit{-free} if $s_i \leq h-1$ for all $i \in \{1, \cdots, r\}$, and we say $\mathfrak{m}$ is $h$\textit{-full} if $s_i \geq h$ for all $i \in \{1, \cdots, r\}$. Let $\mathcal{S}_h$ denote the set of $h$-free ideals and $\mathcal{N}_h$ denote the set of $h$-full ideals. 

Next, we present the distribution of $h$-free and $h$-full ideals in $K$. For a number field $K$, we recall that $\zeta_K(s)$ refers to the Dedekind zeta function of $K$:
\[
\zeta_K(s) = \sum_{\substack{\mathfrak{m} \in \mathcal{M} \\ \mathfrak{m} \neq 0}} \frac1{(N(\mathfrak{m}) )^s} 
= \prod_{\wp \in \mathcal{P}} \Big( 1- N (\wp)^{-s}\Big)^{-1}
\ \text{for } \Re (s) >1,
\]
where $\mathfrak{m}$  and $\wp$ respectively range through the non-zero ideals and the prime ideals of $\mathcal{O}_K$. Let $\kappa$ be the residue of the simple pole of $\zeta_K(s)$ at $s=1$ and be given by
\begin{equation}\label{defkappa}
    \kappa = \kappa_K = \frac{2^{r_1}(2 \pi)^{r_2} h R}{\nu \sqrt{|d_K|}},
\end{equation}
with
\begin{align*}
    r_1 & = \text{the number of real embeddings of } K,\\
    2 r_2 & = \text{the number of complex embeddings of } K,\\
    h & = \text{the class number},\\
    R & = \text{the regulator}, \\
    \nu & = \text{the number of roots of unity}, \\
    d_K & = \text{the discriminant of } K.
\end{align*}
Note that, if $K = \mathbb{Q}$, then $\kappa = 1$. 

For $x > 1$, let $I_K(x)$ count the number of ideals with norm $N(\cdot)$ less than or equal to $x$. Landau in \cite[Satz 210]{Landau} proved that
\begin{equation}\label{Landaueq}
    I_K(x) = \kappa x + O \left( x^{1 - \frac{2}{n_{\scaleto{K}{3pt}}+1}}\right),
\end{equation}
where $\kappa$ is defined in \eqref{defkappa}. Note that the above equation satisfies \cite[Chapter 4, Axiom A]{jk2} with $G = \mathcal{M} \backslash \{ 0 \}$, $A = \kappa$, $\delta = 1$, and $\eta = 1 - 2/({n_{\scaleto{K}{5pt}}+1})$. Thus, by \cite[Chapter 4, Proposition 5.5]{jk2}, we estimate the number of $h$-free ideals with norm $N(\cdot)$ less than or equal to $x$ as the following:
\begin{equation}\label{hfreeidealcount}
    |\mathcal{S}_h(x)| = \frac{\kappa}{\zeta_K(h)} x + O_h \big( R_{\mathcal{S}_h}(x) \big),
\end{equation}
    where
    \begin{equation}\label{RSh(x)}
    R_{\mathcal{S}_h}(x) = \begin{cases}
    x^{1 - \frac{2}{n_{\scaleto{K}{3pt}}+1}}  & \text{ if } \frac{1}{h} < 1 - \frac{2}{n_{\scaleto{K}{3pt}} + 1}, \\
    x^{1 - \frac{2}{n_{\scaleto{K}{3pt}}+1}} (\log x) & \text{ if } \frac{1}{h} = 1 - \frac{2}{n_{\scaleto{K}{3pt}} + 1}, \\
    x^{\frac{1}{h}} & \text{ if } \frac{1}{h} > 1 - \frac{2}{n_{\scaleto{K}{3pt}} + 1}.
\end{cases}
    \end{equation}
\begin{rmk}\label{remark1}
    In this work, for convenience, we shall use $R_{\mathcal{S}_h}(x) \ll x^{\tau}$ for some $\tau < 1$  which is evident from the above result. Also, note that, if $n_K = 1$, then $K = \mathbb{Q}$, $\zeta_\mathbb{Q}$ is the classical Riemann $\zeta$-function, and $R_{\mathcal{S}_h}(x) = x^{1/h}$ which matches the distribution of $h$-free natural numbers (see \cite[(4)]{jala}).
\end{rmk}
For the distribution of $h$-full ideals, we prove:
\begin{thm}\label{hfullideals}
Let $x > 1$ be any real number and $h \geq 2$ be any integer. Let $\mathcal{N}_h(x)$ denote the set of $h$-full ideals with norm $N(\cdot)$ less than or equal to $x$. We have
$$|\mathcal{N}_h(x)| = \kappa \gamma_{\scaleto{h}{4.5pt}} x^{1/h} + O_h \big( R_{\mathcal{N}_h}(x) \big),$$
where $\gamma_{\scaleto{h}{4.5pt}}$ is the constant defined as
\begin{equation}\label{gammahk}
    \gamma_{\scaleto{h}{4.5pt}} = \gamma_{\scaleto{h,K}{5.5pt}} := \prod_\wp \left( 1 + \frac{N(\wp) - N(\wp)^{1/h}}{N(\wp)^2 \left( N(\wp)^{1/h} - 1 \right)}\right),
\end{equation}
and where
\begin{equation}\label{E2(x)}
    R_{\mathcal{N}_h}(x) = \begin{cases}
        x^{\frac{1}{h} \left( 1 - \frac{2}{n_{\scaleto{K}{2.5pt}}+1}\right)} & \text{ if } \frac{h}{h+1} < 1 - \frac{2}{n_{\scaleto{K}{3pt}} + 1}, \\
        x^{\frac{1}{h+1}} (\log x) & \text{ if } \frac{h}{h+1} = 1 - \frac{2}{n_{\scaleto{K}{3pt}} + 1}, \\
        x^{\frac{1}{h+1}} & \text{ if } \frac{h}{h+1} > 1 - \frac{2}{n_{\scaleto{K}{3pt}} + 1}.
    \end{cases} 
\end{equation}
\end{thm}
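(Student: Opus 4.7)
The plan is to obtain a Dirichlet-convolution identity for the indicator of $\mathcal{N}_h$ and then invoke Landau's asymptotic \eqref{Landaueq} pointwise. I would first establish the bijection
$$\mathcal{N}_h\ \longleftrightarrow\ \mathcal{M}\times\mathcal{B},\qquad \mathfrak{m}\longmapsto(\mathfrak{a},\mathfrak{b}),\qquad \mathfrak{m}=\mathfrak{a}^h\mathfrak{b},$$
where $\mathcal{B}$ consists of those ideals of $\mathcal{O}_K$ whose every prime exponent lies in $\{0\}\cup\{h+1,h+2,\ldots,2h-1\}$. The bijection is verified prime by prime: for each $s=v_\wp(\mathfrak{m})\ge h$, write $s=ha+b$ with $b$ the unique representative of $s\bmod h$ in $\{0,h+1,\ldots,2h-1\}$; the $h$-fullness $s\ge h$ forces $a\ge 0$, and conversely any pair in $\mathcal{M}\times\mathcal{B}$ yields an $h$-full ideal.

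This identity gives
$$|\mathcal{N}_h(x)|\;=\;\sum_{\substack{\mathfrak{b}\in\mathcal{B}\\ N(\mathfrak{b})\le x}} I_K\!\left((x/N(\mathfrak{b}))^{1/h}\right),$$
into which I would insert Landau's formula \eqref{Landaueq} with $\eta:=1-\tfrac{2}{n_K+1}$. The main term becomes $\kappa\, x^{1/h}\sum_{\mathfrak{b}\in\mathcal{B},\,N(\mathfrak{b})\le x} N(\mathfrak{b})^{-1/h}$. The associated Dirichlet series
$$H(s)\ :=\ \sum_{\mathfrak{b}\in\mathcal{B}} N(\mathfrak{b})^{-s}\ =\ \prod_{\wp}\Bigl(1+N(\wp)^{-(h+1)s}+\cdots+N(\wp)^{-(2h-1)s}\Bigr)$$
converges absolutely for $\Re(s)>1/(h+1)$, and a short algebraic simplification of the geometric-like local factor at $s=1/h$ recovers exactly the Euler product $\gamma_{\scaleto{h}{4.5pt}}$ of \eqref{gammahk}. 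Replacing the truncated sum by $H(1/h)=\gamma_{\scaleto{h}{4.5pt}}$ requires a tail bound on $\sum_{N(\mathfrak{b})>x}N(\mathfrak{b})^{-1/h}$, handled by partial summation after establishing $B(y):=|\{\mathfrak{b}\in\mathcal{B}:N(\mathfrak{b})\le y\}|\ll_h y^{1/(h+1)}$ (itself a consequence of the same hyperbola-style argument applied to $\mathcal{B}$); the resulting additive error is $O(x^{1/(h+1)})$, absorbable in $R_{\mathcal{N}_h}(x)$.

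The Landau error term contributes $x^{\eta/h}\sum_{\mathfrak{b}\in\mathcal{B},\,N(\mathfrak{b})\le x}N(\mathfrak{b})^{-\eta/h}$, and the three cases of $R_{\mathcal{N}_h}(x)$ in \eqref{E2(x)} arise precisely according to the sign of $\eta/h-1/(h+1)$, equivalently whether $\tfrac{h}{h+1}$ is less than, equal to, or greater than $\eta$. When $\eta/h>1/(h+1)$ the series is bounded and the error is $O(x^{\eta/h})$; when $\eta/h=1/(h+1)$ partial summation of $B(y)$ produces an extra $\log x$; when $\eta/h<1/(h+1)$ the partial sum grows like $x^{1/(h+1)-\eta/h}$ and combines with the prefactor to give $O(x^{1/(h+1)})$. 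These three pieces match \eqref{E2(x)} exactly.

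The main obstacle I expect is the bookkeeping in the preceding paragraph: establishing a uniform bound $B(y)\ll_h y^{1/(h+1)}$ and threading it cleanly through partial summation in all three regimes. Identifying the correct auxiliary family $\mathcal{B}$ and verifying $H(1/h)=\gamma_{\scaleto{h}{4.5pt}}$ are direct Euler-product manipulations; the delicate part is ensuring that the tail of the main term and the three cases of the Landau error combine to the single expression $R_{\mathcal{N}_h}(x)$ stated in the theorem.
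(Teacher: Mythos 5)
Your argument is correct, and it reaches the stated asymptotic by a noticeably more streamlined route than the paper. Both proofs rest on the same structural fact — an $h$-full ideal factors uniquely as an $h$-th power times a part whose exponents lie in $\{h+1,\dots,2h-1\}$ (your $\mathcal{B}$ is exactly the set of products $\mathfrak{a}_1^{h+1}\cdots\mathfrak{a}_{h-1}^{2h-1}$ with squarefree, pairwise coprime $\mathfrak{a}_i$ from the paper's decomposition \eqref{defnhfull}) — but the executions differ. The paper first drops the squarefree/coprimality constraints, proves Lemma \ref{T_h(X)bound} for the unweighted count $\mathcal{T}_h(x)$ by iterating Landau's estimate $h-1$ times, and then reinstates the constraints by convolving with the coefficients of $\mathcal{G}_h(s)=\phi_h^K(s)/\zeta_K((2h+2)s)$, which requires the Ivi\'c--Shiu polynomial identity \eqref{iden2} and the absolute convergence of $\mathcal{G}_h$ in $\Re(s)>1/(2h+2)$; the constant then emerges as $\bigl(\prod_{i=1}^{h-1}\zeta_K(1+i/h)\bigr)\mathcal{G}_h(1/h)$ and must be simplified to \eqref{gammahk}. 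You instead keep the constrained set $\mathcal{B}$ intact, apply Landau only once (in the $h$-th power variable), and read the constant directly as $H(1/h)$; your local-factor simplification is correct, since with $t=N(\wp)^{-1/h}$ one has $1+t^{h+1}+\cdots+t^{2h-1}=1+\frac{t^{h+1}-t^{2h}}{1-t}$, which is the factor in \eqref{gammahk}. What your route buys is the elimination of the $\phi_h^K$/$\zeta_K((2h+2)s)$ bookkeeping and of the second convolution; what it costs is that all the weight falls on the single uniform bound $B(y)\ll_h y^{1/(h+1)}$, which you correctly flag. That bound is not automatic from the abscissa $1/(h+1)$ alone (that only gives $y^{1/(h+1)+\epsilon}$), but it does follow by the nested counting you indicate: bound the number of $\mathfrak{a}_1$ with $N(\mathfrak{a}_1)\le (y/N(\mathfrak{a}_2^{h+2}\cdots\mathfrak{a}_{h-1}^{2h-1}))^{1/(h+1)}$ by Landau and sum over the remaining variables using Part 3 of Lemma \ref{boundnm}, exactly in the spirit of the paper's proof of Lemma \ref{T_h(X)bound}. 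With that lemma in hand, your partial-summation treatment of the truncation tail ($O(x^{1/(h+1)})$, absorbable in $R_{\mathcal{N}_h}$ in every regime) and your three-case analysis of the Landau error according to the sign of $\eta/h-1/(h+1)$, $\eta=1-\tfrac{2}{n_K+1}$, reproduce \eqref{E2(x)} exactly.
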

\begin{rmk}\label{remark2}
    In this work, again for convenience, we shall use $R_{\mathcal{N}_h}(x) \ll x^{\xi/h}$ for some $\xi < 1$  which is evident from \thmref{hfullideals}. Also, note that, for $n_K =1$, $R_{\mathcal{N}_h}(x) = x^{1/(h+1)}$ which matches the distribution of $h$-full numbers from the work of Ivi\'c and Shiu (see \cite[Lemma 1]{is}).
\end{rmk}
Finally, using the above distribution results, we prove that $\omega(\mathfrak{m})$ obeys the Gaussian distribution over $h$-free and over $h$-full ideals as well. We achieve this as an application of \thmref{yurugen}. In particular, we establish the following two theorems:
\begin{thm}\label{erdoskachfreeideal}
Let $x > 1$ be any real number and $h \geq 2$ be any integer. Let $\mathcal{S}_h(x)$ denote the set of $h$-free ideals with norm $N(\cdot)$ less than or equal to $x$. Then for any $\gamma \in \mathbb{R}$, we have
$$\lim_{x \rightarrow \infty} \frac{1}{|\mathcal{S}_h(x)|} \left| \left\{ \mathfrak{m} \in \mathcal{S}_h(x) \ : \ N(\mathfrak{m}) \geq 3, \ \frac{\omega(\mathfrak{m}) - \log \log N(\mathfrak{m})}{\sqrt{\log \log N(\mathfrak{m})}} \leq \gamma \right\} \right| = \Phi(\gamma).$$
\end{thm}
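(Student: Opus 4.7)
The plan is to verify the six conditions of \thmref{yurugen} for $\mathcal{S}=\mathcal{S}_h$. Condition \eqref{Scondition} is immediate from \eqref{hfreeidealcount}, since $|\mathcal{S}_h(x^{1/2})|\ll x^{1/2}=o(x)=o(|\mathcal{S}_h(x)|)$. The real work is to pin down the densities $\lambda_\wp$ (and their multi-prime analogues $\lambda_{\wp_1}\cdots\lambda_{\wp_u}$) together with sharp bounds on the error terms $e_\wp$ and $e_{\wp_1\cdots\wp_u}$.

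For a single prime ideal $\wp$, I would write the indicator function of $h$-free ideals via Möbius inversion ($\mathbf{1}_{h\text{-free}}(\mathfrak{m})=\sum_{\mathfrak{d}^h\mid\mathfrak{m}}\mu_K(\mathfrak{d})$), combine it with the divisibility condition $\wp\mid\mathfrak{m}$, and then invoke Landau's estimate \eqref{Landaueq} on each resulting subsum. The main term produces
$$\lambda_\wp \;=\; \frac{1-N(\wp)^{-(h-1)}}{N(\wp)\,(1-N(\wp)^{-h})} \;=\; \frac{1}{N(\wp)}+O\!\left(\frac{1}{N(\wp)^{h}}\right),$$
with error $O\!\big(R_{\mathcal{S}_h}(x/N(\wp))\big)$ in the count. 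Dividing by $|\mathcal{S}_h(x)|$ and using $R_{\mathcal{S}_h}(x)\ll x^{\tau}$ for some $\tau<1$ (\rmkref{remark1}), I obtain $|e_\wp|\ll x^{\tau-1}/N(\wp)^{\tau}+\lambda_\wp\, x^{\tau-1}$. The same Möbius argument applied to distinct $\wp_1,\ldots,\wp_u$ factorises the main term into $\prod_i\lambda_{\wp_i}$ and leaves an error bounded by the product analogue $x^{\tau-1}/\prod_iN(\wp_i)^\tau+(\prod_i\lambda_{\wp_i})\,x^{\tau-1}$.

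With these tools in hand, I would choose $y=x^{1/\log\log x}$ and fix any $\beta\in(0,1]$. Condition (a) is a trivial counting argument: an ideal of norm at most $x$ can have at most $\lfloor 1/\beta\rfloor$ prime divisors of norm exceeding $x^\beta$. Conditions (b), (d), (e) follow from the Mertens-type estimate $\sum_{N(\wp)\le z}1/N(\wp)=\log\log z+M_K+o(1)$ for prime ideals of $K$, combined with the asymptotic $\lambda_\wp=1/N(\wp)+O(1/N(\wp)^h)$: the choice of $y$ forces $\log\log y=\log\log x-\log\log\log x$, and $\log\log\log x=o(\sqrt{\log\log x})$ delivers (b) and (d); condition (e) reduces to the convergence $\sum_\wp 1/N(\wp)^2\le\zeta_K(2)$. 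Conditions (c) and (f) follow from the error bounds just established: both collapse to sums with a saved positive power of $x$, which are $o((\log\log x)^{-r/2})$ for any fixed $r$.

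The main obstacle is the Möbius-inversion bookkeeping that yields the clean error $O\!\big(R_{\mathcal{S}_h}(x/\prod_iN(\wp_i))\big)$ uniformly in the choice of primes $\wp_1,\ldots,\wp_u$, since this is precisely what drives the smallness of the multi-prime error required by condition (f). Once that estimate is established, the rest of the argument reduces to a routine deployment of number-field Mertens together with the asymptotic formula for $\lambda_\wp$.
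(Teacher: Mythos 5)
Your proposal is correct and follows essentially the same route as the paper: verify \eqref{Scondition} and conditions (a)--(f) of \thmref{yurugen} with $y=x^{1/\log\log x}$, the density $\lambda_\wp=\frac{N(\wp)^{h-1}-1}{N(\wp)^h-1}$ (your expression is algebraically the same), and error terms of size $x^{\tau-1}\prod_i N(\wp_i)^{-\tau}$, with number-field Mertens handling (b), (d), (e) and the power-of-$x$ saving handling (c) and (f). The only cosmetic difference is that the paper organizes the M\"obius/Landau bookkeeping through a count of $h$-free ideals coprime to a fixed prime (\lmaref{hfreeidealrestrict}), summed over the exponent $1\le k\le h-1$ and combined via the Chinese Remainder Theorem, rather than your direct insertion of the divisibility condition into the M\"obius inversion.
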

\begin{thm}\label{erdoskachfullideal}
Let $x > 1$ be any real number and $h \geq 2$ be any integer. Let $\mathcal{N}_h(x)$ denote the set of $h$-full ideals with norm $N(\cdot)$ less than or equal to $x$. Then for any $\gamma \in \mathbb{R}$, we have
$$\lim_{x \rightarrow \infty} \frac{1}{|\mathcal{N}_h(x)|} \left| \left\{ \mathfrak{m} \in \mathcal{N}_h(x) \ : \ N(\mathfrak{m}) \geq 3, \ \frac{\omega(\mathfrak{m}) - \log \log N(\mathfrak{m})}{\sqrt{\log \log N(\mathfrak{m})}} \leq \gamma \right\} \right| = \Phi(\gamma).$$
\end{thm}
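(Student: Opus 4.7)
The plan is to apply \thmref{yurugen} to $\mathcal{S} = \mathcal{N}_h$, taking $\beta = 1/h$ and the slowly growing parameter $y = y(x) = x^{1/\log\log x}$. Condition \eqref{Scondition} is immediate from \thmref{hfullideals}: $|\mathcal{N}_h(x^{1/2})| \asymp x^{1/(2h)} = o(x^{1/h}) \asymp |\mathcal{N}_h(x)|$. Condition (a) is also immediate (and in fact holds with the sharper conclusion ``$=0$''): if $\mathfrak{m} \in \mathcal{N}_h(x)$ and $\wp \mid \mathfrak{m}$, then $\wp^h \mid \mathfrak{m}$ since $\mathfrak{m}$ is $h$-full, whence $N(\wp)^h \leq N(\mathfrak{m}) \leq x$ and $N(\wp) \leq x^{1/h} = x^\beta$.

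The main computation is to determine $\lambda_{\wp_1 \cdots \wp_u}$ and to control $e_{\wp_1 \cdots \wp_u}$. For distinct primes $\wp_1, \ldots, \wp_u$, I would write each $\mathfrak{m} \in \mathcal{N}_h$ divisible by all the $\wp_i$ uniquely as $\mathfrak{m} = \wp_1^{k_1} \cdots \wp_u^{k_u} \mathfrak{n}$ with each $k_i \geq h$ and $\mathfrak{n}$ an $h$-full ideal coprime to $\wp_1 \cdots \wp_u$. Letting $\mathcal{N}_h^{(\wp_1, \ldots, \wp_u)}(X)$ denote the count of such $\mathfrak{n}$ with $N(\mathfrak{n}) \leq X$, I expect
\[
\left|\mathcal{N}_h^{(\wp_1, \ldots, \wp_u)}(X)\right| = \kappa\, \gamma_h^{(\wp_1, \ldots, \wp_u)} X^{1/h} + O_h\!\left(X^{\xi/h}\right),
\]
where $\gamma_h^{(\wp_1, \ldots, \wp_u)}$ is the product \eqref{gammahk} with the factors at $\wp_1, \ldots, \wp_u$ removed. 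This is essentially \thmref{hfullideals} re-run with finitely many Euler factors deleted; since those factors are bounded, the residue computation at $s=1/h$ and the error term go through with the same exponent, uniformly in the primes omitted. Summing over $k_i \geq h$ and dividing by $|\mathcal{N}_h(x)|$ yields
\[
\lambda_\wp = \frac{1}{N(\wp)\bigl(1 - N(\wp)^{-1/h}\bigr) + 1}, \qquad \lambda_{\wp_1 \cdots \wp_u} = \prod_{i=1}^u \lambda_{\wp_i},
\]
with error $\left|e_{\wp_1 \cdots \wp_u}\right| \ll x^{(\xi-1)/h} \prod_{i=1}^u N(\wp_i)^{-\xi}$. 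In particular $\lambda_\wp = N(\wp)^{-1} + O\bigl(N(\wp)^{-1 - 1/h}\bigr)$.

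Conditions (b)--(f) then follow from standard Mertens-type estimates over $K$: $\sum_{N(\wp) \leq z} N(\wp)^{-1} = \log\log z + M_K + o(1)$, convergence of $\sum_\wp N(\wp)^{-\alpha}$ for $\alpha > 1$, and the prime ideal bound $\sum_{N(\wp) \leq z} N(\wp)^{-\alpha} \ll z^{1-\alpha}/\log z$ for $\alpha < 1$. With $y = x^{1/\log\log x}$, one has $\log\log y = \log\log x - \log\log\log x$, so (d) gives $\log\log x + O(\log\log\log x)$; (b) collapses to $O(\log\log\log x)$; (e) is $O(1)$ by absolute convergence; (c) is $\ll 1/\log x$ from a geometric sum in $N(\wp)$; and for each fixed $r$ and $u \leq r$, (f) is bounded by $x^{(\xi-1)/h}\bigl(y^{1-\xi}/\log y\bigr)^u \ll x^{-\delta_u}$ for some $\delta_u > 0$, since $y$ grows subpolynomially. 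Each bound is $o((\log\log x)^{-r/2})$ or $o((\log\log x)^{1/2})$ as required.

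I expect the main technical step to be the uniform asymptotic for $|\mathcal{N}_h^{(\wp_1, \ldots, \wp_u)}(X)|$ with error $O_h(X^{\xi/h})$ independent of the omitted primes. This can be addressed either by re-deriving \thmref{hfullideals} with a few Euler factors deleted---noting that such deletions perturb the Dirichlet series only by bounded local factors near the pole $s = 1/h$---or by inclusion--exclusion, writing the coprime count as a finite combination of shifted $|\mathcal{N}_h(X/N(\wp^k))|$ terms via Möbius-style inversion in the lattice of ideals supported on $\{\wp_1,\ldots,\wp_u\}$. Once this auxiliary estimate is in hand, the verification of (a)--(f) above reduces to the Mertens sums sketched.
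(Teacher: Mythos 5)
Your plan is correct and follows essentially the same route as the paper: apply \thmref{yurugen} to $\mathcal{S}=\mathcal{N}_h$ with $\beta=1/h$, compute $\lambda_\wp = \frac{1}{N(\wp)(1-N(\wp)^{-1/h})+1}$ via the factorization $\mathfrak{m}=\wp^k\mathfrak{n}$ with $k\geq h$ and $\mathfrak{n}$ an $h$-full ideal coprime to $\wp$, and prove the key auxiliary count of $h$-full ideals coprime to given primes (the paper's \lmaref{hfullidealsrestrict}, obtained exactly by re-running \thmref{hfullideals} with local Euler factors removed from the Dirichlet series) and then verify (a)--(f) by Mertens-type estimates. The only immaterial difference is your choice $y=x^{1/\log\log x}$ versus the paper's $y=x^{1/(h\log\log x)}$; both satisfy $y<x^\beta$ for large $x$ and give the same conditions.
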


\section{Review of Probability Theory}
In this section, we review some results from probability theory that are essential for our study. Interested readers can find a more detailed version of the results mentioned in this section in the third author's work \cite[Section 2]{liu}.

Let $X$ be a random variable with a probability measure $P$. For a real number $t$, let $F(t)$ be the distribution function of $X$ defined as 
$$F(t) = P(X \leq t).$$
The expectation of $X$ is defined as
$$\textnormal{E}(X) := \int_{-\infty}^\infty t \ d F(t).$$
The variance of $X$, denoted as Var($X$), which measures the deviation of $X$ from its expectation is defined as
$$\textnormal{Var}(X) := \textnormal{E}(X^2) - (\textnormal{E}(X))^2.$$
Moreover, if $Y$ is another random variable with the same probability measure $P$, we have
$$\textnormal{E}(X+Y) = \textnormal{E}(X) + \textnormal{E}(Y).$$
The above property is called the linearity of expectation. Additionally, if $X$ and $Y$ are independent, i.e., for all $x \in X$ and for all $y \in Y$, 
$$P(X \leq x, Y \leq y) = P(X \leq x) \cdot P(Y \leq y),$$
then we have
$$\textnormal{E}(X \cdot Y) = \textnormal{E}(X) \cdot \textnormal{E}(Y),$$
and
$$\textnormal{Var}(X+Y) = \textnormal{Var}(X) + \textnormal{Var}(Y).$$
Given a sequence of random variables $\{X_n\}$ and $\alpha \in \mathbb{R}$, we say $\{X_n\}$ \textit{converges in probability} to $\alpha$ if for any $\epsilon > 0$,
$$\lim_{n \rightarrow \infty} P(|X_n - \alpha| > \epsilon) = 0.$$
We denote this by
$$X_n \xlongrightarrow{P}\alpha.$$
Using the above definitions, we list the following facts from probability theory as mentioned in the third author's work \cite[Page 595-596]{liu}.
\begin{fact}\label{fact1}
    Given a sequence of random variables $\{X_n\}$, if 
    $$\lim_{n \rightarrow \infty} \textnormal{E} (|X_n|) = 0,$$
    we have
    $$X_n \xlongrightarrow{P} 0.$$
\end{fact}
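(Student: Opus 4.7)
The plan is to reduce the claim to Markov's inequality, which bounds the probability that a non-negative random variable exceeds a threshold by the ratio of its expectation to that threshold. This is the standard route from $L^1$ control to control in probability, and it makes the convergence of expectations directly usable as a uniform-in-$n$ tail bound.

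Concretely, I would fix an arbitrary $\epsilon > 0$ and consider the non-negative random variable $|X_n|$. The event $\{|X_n - 0| > \epsilon\}$ equals $\{|X_n| > \epsilon\}$, so I may estimate
\begin{equation*}
P(|X_n| > \epsilon) \;=\; \int_{\{|X_n| > \epsilon\}} dP \;\leq\; \int_{\{|X_n| > \epsilon\}} \frac{|X_n|}{\epsilon}\, dP \;\leq\; \frac{1}{\epsilon}\, \mathrm{E}(|X_n|),
\end{equation*}
where the first inequality uses that $|X_n|/\epsilon \geq 1$ on the event of integration, and the second extends the integration to the whole sample space (since the integrand is non-negative). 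This is the Markov inequality step.

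Once that bound is in place, the hypothesis $\lim_{n \to \infty} \mathrm{E}(|X_n|) = 0$ together with the fact that $\epsilon$ is a fixed positive constant immediately yields
\begin{equation*}
0 \;\leq\; \lim_{n \to \infty} P(|X_n| > \epsilon) \;\leq\; \frac{1}{\epsilon} \lim_{n \to \infty} \mathrm{E}(|X_n|) \;=\; 0.
\end{equation*}
Since $\epsilon > 0$ was arbitrary, this is exactly the definition of $X_n \xrightarrow{P} 0$ given in the paper. There is no real obstacle here: the only subtle point is to recognize that the argument requires no independence, no distributional assumption, and no integrability beyond the finiteness implicit in $\mathrm{E}(|X_n|) \to 0$, so the proof is a two-line application of Markov's inequality followed by taking $n \to \infty$.
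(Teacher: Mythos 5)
Your proof is correct: Markov's inequality applied to the non-negative variable $|X_n|$ gives $P(|X_n| > \epsilon) \leq \epsilon^{-1}\,\mathrm{E}(|X_n|) \to 0$ for each fixed $\epsilon > 0$, which is exactly convergence in probability to $0$ as defined in the paper. The paper itself states this fact without proof (citing the probabilistic background in \cite{liu}), and your argument is precisely the standard one being invoked there, so there is nothing to compare beyond noting that you have supplied the omitted routine verification.
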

\begin{fact}\label{fact2}
    Let $\{ X_n \}$, $\{ Y_n \}$, and $\{ U_n \}$ be sequences of random variables with the same probability measure P. Let $U$ be a distribution function. Suppose
    $$X_n \xlongrightarrow{P} 1 \quad \textnormal{ and } \quad Y_n \xlongrightarrow{P} 0.$$
    For all $\gamma \in \mathbb{R}$, we have
    $$\lim_{n \rightarrow \infty} P(U_n \leq \gamma) = U(\gamma)$$
    if and only if
    $$\lim_{n \rightarrow \infty} P\left( (X_n U_n + Y_n) \leq \gamma \right) = U(\gamma).$$
\end{fact}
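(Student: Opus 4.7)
The plan is to recognize \factref{fact2} as a Slutsky-type equivalence and deduce both directions from the elementary observation that adding a sequence converging to zero in probability does not affect convergence in distribution. The core reduction is to write
\[
X_n U_n + Y_n = U_n + Z_n, \qquad Z_n := (X_n - 1) U_n + Y_n,
\]
and to establish that, under either hypothesis, $Z_n \xlongrightarrow{P} 0$; elementary sandwich bounds then finish the job.

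For the forward direction, assume $P(U_n \leq \gamma) \to U(\gamma)$ for all $\gamma$. This yields tightness of $\{U_n\}$: given $\eta > 0$, pick $M > 0$ with $U(M) - U(-M) > 1 - \eta/2$, so that $P(|U_n| > M) < \eta$ for all large $n$. Then, splitting on the event $\{|U_n| \leq M\}$,
\[
P(|Z_n| > \epsilon) \leq P\bigl(|X_n - 1| > \epsilon/(2M)\bigr) + P(|U_n| > M) + P\bigl(|Y_n| > \epsilon/2\bigr),
\]
whose first and third terms tend to $0$ by $X_n \xlongrightarrow{P} 1$ and $Y_n \xlongrightarrow{P} 0$; letting $\eta \to 0$ gives $Z_n \xlongrightarrow{P} 0$. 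One then sandwiches
\[
P(U_n \leq \gamma - \epsilon) - P(|Z_n| > \epsilon) \leq P(U_n + Z_n \leq \gamma) \leq P(U_n \leq \gamma + \epsilon) + P(|Z_n| > \epsilon),
\]
takes $n \to \infty$ followed by $\epsilon \to 0$, and invokes the continuity of $U$, to conclude $P(X_n U_n + Y_n \leq \gamma) \to U(\gamma)$.

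The reverse direction follows by symmetry. On the event $\{|X_n - 1| \leq 1/2\}$, whose complement has probability tending to $0$, the identity
\[
U_n = \frac{1}{X_n}\bigl(X_n U_n + Y_n\bigr) - \frac{Y_n}{X_n}
\]
holds. The continuity of $t \mapsto 1/t$ at $t = 1$ combined with $X_n \xlongrightarrow{P} 1$ gives $1/X_n \xlongrightarrow{P} 1$, and then $Y_n/X_n \xlongrightarrow{P} 0$ follows from $Y_n \xlongrightarrow{P} 0$. Applying the forward direction already proved, with $(X_n U_n + Y_n,\, 1/X_n,\, -Y_n/X_n)$ playing the roles of $(U_n,\, X_n,\, Y_n)$, delivers $P(U_n \leq \gamma) \to U(\gamma)$. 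The main point requiring care is the tightness step in the forward direction; once that is in hand, both implications are routine, and the only other subtlety is the negligible set on which $X_n$ may vanish, handled trivially by $X_n \xlongrightarrow{P} 1$.
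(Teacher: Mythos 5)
Your argument is correct, but note that the paper itself gives no proof of Fact~\ref{fact2}: it is quoted as a known result from probability theory, with a pointer to \cite{liu}, so there is no in-paper argument to compare against. What you have written is the standard Slutsky-type proof, and it holds up: the decomposition $X_nU_n+Y_n=U_n+Z_n$ with $Z_n=(X_n-1)U_n+Y_n$, the tightness of $\{U_n\}$ extracted from $P(U_n\leq\gamma)\to U(\gamma)$, the three-event splitting giving $Z_n\xlongrightarrow{P}0$, the sandwich $P(U_n\leq\gamma-\epsilon)-P(|Z_n|>\epsilon)\leq P(U_n+Z_n\leq\gamma)\leq P(U_n\leq\gamma+\epsilon)+P(|Z_n|>\epsilon)$, and the reverse direction via $U_n=\frac{1}{X_n}(X_nU_n+Y_n)-\frac{Y_n}{X_n}$ on the event $\{|X_n-1|\leq 1/2\}$ (modifying $1/X_n$ arbitrarily on the complementary event, whose probability tends to $0$, which does not affect limits of $P(\cdot\leq\gamma)$) are all sound. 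One small point deserves an explicit remark: your final limit step uses continuity of $U$ at $\gamma$, whereas the statement as written asserts convergence at every $\gamma\in\mathbb{R}$ for a general distribution function $U$; without continuity the sandwich only yields $U(\gamma^-)\leq\liminf\leq\limsup\leq U(\gamma)$, so strictly the equivalence should be read at continuity points of $U$. This is harmless here, since the fact is only ever applied with $U=\Phi$, which is continuous, but it would be worth stating the continuity hypothesis (or restricting to continuity points) if you present the proof in full.
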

Let $\Phi(\gamma)$ denote the Gaussian normal distribution as defined in \eqref{phi(a)}. For $r \in \mathbb{N}$, the $r$-th moment of $\Phi$ is defined as
$$\mu_r := \int_{-\infty}^\infty t^r d \Phi(t).$$
Then we have:
\begin{fact}\label{fact3}
    Given a sequence of distribution functions $\{F_n\}$, if for all $r \in \mathbb{N}$,
    $$\lim_{n \rightarrow \infty} \int_{-\infty}^\infty t^r d F_n(t) = \mu_r,$$
    then for all $\gamma \in \mathbb{R}$, we have
    $$\lim_{n \rightarrow \infty} F_n(\gamma) = \Phi(\gamma).$$
\end{fact}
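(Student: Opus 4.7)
The plan is to prove Fact 3 by the classical \emph{method of moments}, which combines three ingredients: tightness of $\{F_n\}$, extraction of subsequential weak limits, and moment-determinacy of the Gaussian distribution.

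First I would establish tightness. Since $\int t^2\, dF_n(t) \to \mu_2 = 1$, the second moments are uniformly bounded, say $\int t^2\, dF_n(t) \le C$ for all $n$. Chebyshev's inequality then yields $F_n(-M) + (1 - F_n(M)) \le C/M^2$ for every $M>0$, so $\{F_n\}$ is tight. Helly's selection theorem therefore lets me extract from any subsequence a further subsequence $\{F_{n_k}\}$ converging weakly (at continuity points) to some sub-distribution function $G$. Boundedness of the $(r+2)$-th absolute moments---automatic from the hypothesis, since $\int t^{r+2}\, dF_n(t) \to \mu_{r+2}$---supplies uniform integrability of the functions $t \mapsto t^r$ against $\{F_{n_k}\}$. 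Hence no mass escapes to infinity, $G$ is a genuine probability distribution, and $\int_{-\infty}^\infty t^r\, dG(t) = \mu_r$ for every $r \in \mathbb{N}$.

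The decisive step---and the main obstacle---is to conclude that $G = \Phi$. This is the assertion that the Gaussian is determined by its moments $\mu_{2r} = (2r-1)!!$. I would verify Carleman's criterion $\sum_{r=1}^\infty \mu_{2r}^{-1/(2r)} = \infty$: Stirling's formula gives $\mu_{2r}^{1/(2r)} = O(\sqrt{r})$, so the series diverges and Carleman's theorem forces $G = \Phi$. A stylistic alternative would be to invoke L\'evy's continuity theorem after showing that the characteristic functions $\widehat{F_n}(\xi)$ converge to $e^{-\xi^2/2}$ via their Taylor expansion in the moments, but the Carleman route avoids the analyticity estimates needed to sum the Taylor series uniformly.

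Finally, a standard subsubsequence argument closes the proof: since every subsequence of $\{F_n\}$ has a further subsequence converging weakly to $\Phi$, the whole sequence $F_n$ converges weakly to $\Phi$; because $\Phi$ is continuous on $\mathbb{R}$, this is equivalent to the claimed pointwise statement $F_n(\gamma) \to \Phi(\gamma)$ for every $\gamma \in \mathbb{R}$.
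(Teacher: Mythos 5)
The paper itself offers no proof of Fact~3: it is quoted from \cite{liu} as a standard fact of probability theory (it is the classical Fr\'echet--Shohat / method-of-moments theorem). Your argument supplies exactly the standard proof of that theorem, and it is correct in structure and in all essential points: Chebyshev plus the bounded second moments gives tightness; Helly extracts subsequential weak limits, which tightness upgrades to genuine distribution functions; uniform integrability lets you pass the moments to the limit; Carleman's condition (with $\mu_{2r}=(2r-1)!!$, so $\mu_{2r}^{1/(2r)}=O(\sqrt{r})$) shows the Gaussian is moment-determinate, forcing every subsequential limit to be $\Phi$; and the subsequence-of-subsequences argument, together with the continuity of $\Phi$, gives $F_n(\gamma)\to\Phi(\gamma)$ for every $\gamma$.

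One small imprecision worth repairing: you claim the uniform bound on the $(r+2)$-th \emph{absolute} moments is ``automatic from the hypothesis, since $\int t^{r+2}\,dF_n(t)\to\mu_{r+2}$.'' When $r+2$ is odd this does not follow, because the hypothesis controls the signed moment, not $\int |t|^{r+2}\,dF_n(t)$. The fix is immediate: use the even moments instead. Since $\sup_n\int t^{2r+2}\,dF_n(t)<\infty$ by hypothesis and $|t|^{r}\le 1+t^{2r}$, the family $t\mapsto t^{r}$ is uniformly integrable with respect to $\{F_n\}$, which is all you need both to rule out escape of mass and to conclude $\int t^r\,dG(t)=\mu_r$ along the weakly convergent subsequence. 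With that adjustment your proof is complete, and it is a legitimate self-contained substitute for the citation the paper relies on.
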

As a converse of the above fact, we have
\begin{fact}\label{fact4}
    Let $r \in \mathbb{N}$. Given a sequence of distribution functions $\{F_n\}$, if 
    $$\lim_{n \rightarrow \infty} F_n(\gamma) = \Phi(\gamma), \quad \textnormal{for all } \gamma \in \mathbb{R}$$
    and
    $$\sup_n \left\{ \int_{-\infty}^\infty |t|^{r+\delta} dF_n(t) \right\} < \infty, \quad \textnormal{for some } \delta = \delta(r) > 0,$$
    we have
    $$\lim_{n \rightarrow \infty} \int_{-\infty}^\infty t^r d F_n(t) = \mu_r.$$
\end{fact}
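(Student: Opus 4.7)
The plan is to combine the hypothesized pointwise convergence of $F_n$ to $\Phi$ with the uniform bound on absolute moments of order $r+\delta$ via a standard truncation argument. Because $\Phi$ is continuous on $\mathbb{R}$, the pointwise convergence $F_n(\gamma) \to \Phi(\gamma)$ amounts to weak convergence of the associated probability measures, so the Helly--Bray theorem gives $\int g \, dF_n \to \int g \, d\Phi$ for every bounded continuous $g : \mathbb{R} \to \mathbb{R}$. Since $g(t) = t^r$ is unbounded, the task reduces to approximating $t^r$ by bounded continuous functions and controlling the discrepancy uniformly in $n$ by means of the moment hypothesis.

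Concretely, I would introduce a continuous cut-off $\psi_M : \mathbb{R} \to [0, 1]$ that equals $1$ on $[-M, M]$, vanishes outside $[-2M, 2M]$, and is piecewise linear in between, and set $g_M(t) = t^r \psi_M(t)$. Since $g_M$ is bounded and continuous, weak convergence yields $\int g_M \, dF_n \to \int g_M \, d\Phi$ as $n \to \infty$. For the tails, the elementary inequality $|t|^r \leq M^{-\delta} |t|^{r+\delta}$, valid on $|t| \geq M$, combined with the uniform bound $C := \sup_n \int |t|^{r+\delta} \, dF_n(t) < \infty$, yields
$$\left| \int t^r \, dF_n - \int g_M \, dF_n \right| \leq \int_{|t| \geq M} |t|^r \, dF_n \leq C M^{-\delta},$$
and an analogous estimate holds with $dF_n$ replaced by $d\Phi$, since $\Phi$ has finite moments of every order. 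Given $\epsilon > 0$, I would first fix $M$ large enough that both tail contributions fall below $\epsilon/3$, and then choose $N$ so that $\left| \int g_M \, dF_n - \int g_M \, d\Phi \right| < \epsilon/3$ for all $n \geq N$; a triangle inequality then delivers $\left| \int t^r \, dF_n - \mu_r \right| < \epsilon$ for all $n \geq N$, which is the desired conclusion.

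The main obstacle is ensuring the tail estimate is strong enough to beat the uniform moment bound uniformly in $n$, and this is precisely where the extra exponent $\delta > 0$ in the hypothesis is essential: without it, the decaying factor $M^{-\delta}$ would be replaced by $1$ and the argument would collapse. A minor care is needed to arrange $\psi_M$ to be genuinely continuous rather than an indicator, so that $g_M$ falls within the scope of the Helly--Bray theorem; the piecewise-linear interpolation described above accomplishes this without disturbing the tail bound.
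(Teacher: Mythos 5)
The paper states Fact~\ref{fact4} without proof, deferring to \cite[Section 2]{liu}, so there is no in-paper argument to compare against. Your proof is correct and is the standard argument for this kind of moment-convergence result: the pointwise convergence of $F_n$ to the everywhere-continuous $\Phi$ is equivalent to weak convergence, which controls $\int g_M\,dF_n$ for the continuous truncation $g_M$; the uniform $(r+\delta)$-moment bound is exactly what yields uniform integrability of $|t|^r$ under $\{F_n\}$, via the inequality $|t|^r \le M^{-\delta}|t|^{r+\delta}$ on $|t|\ge M$, so the truncation error is $O(M^{-\delta})$ uniformly in $n$; and the matching tail estimate for $\Phi$ holds because the Gaussian has moments of every order. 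The triangle inequality then closes the argument. One small point worth making explicit: the difference $t^r - g_M(t) = t^r\bigl(1-\psi_M(t)\bigr)$ is supported on $|t|\ge M$ with $0\le 1-\psi_M\le 1$, which is what licenses the bound $\bigl|\int t^r\,dF_n - \int g_M\,dF_n\bigr|\le \int_{|t|\ge M}|t|^r\,dF_n$; you gesture at this but do not spell out the sign and support of $1-\psi_M$. Otherwise the proposal is complete.
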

The next fact is a special case of the Central Limit Theorem.
\begin{fact}\label{fact5}
    Let $X_1,X_2,\ldots, X_i, \ldots$ be a sequence of independent random variables and $\textnormal{Im}(X_i)$ is the image of $X_i$. Suppose
    \begin{enumerate}
        \item $\sup_{i} \{ \textnormal{Im}(X_i) \} < \infty$,
        \item $\textnormal{E}(X_i) = 0$ and $\textnormal{Var}(X_i) < \infty$ for all $i$.
    \end{enumerate}
    For $n \in \mathbb{N}$, let $\Phi_n$ be the normalization of $X_1,X_2,\ldots,X_n$ defined as
    $$\Phi_n := \left( \sum_{i=1}^{n} X_i \right) \bigg\slash \left( \sum_{i=1}^n \textnormal{Var}(X_i) \right)^{1/2}.$$
    If $\sum_{i=1}^\infty \textnormal{Var}(X_i)$ diverges, then we have
    $$\lim_{n \rightarrow \infty} P(\Phi_n \leq \gamma) = \Phi(\gamma).$$
\end{fact}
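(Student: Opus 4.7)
The plan is to recognize Fact~\ref{fact5} as a special case of the Lindeberg-Feller central limit theorem for triangular arrays, in which the uniform boundedness hypothesis makes the Lindeberg condition trivially satisfied. First I would set $M := \sup_i \sup\{|x| : x \in \textnormal{Im}(X_i)\} < \infty$ and $s_n^2 := \sum_{i=1}^n \textnormal{Var}(X_i)$, noting that $s_n^2 \to \infty$ by assumption. Consider the triangular array with rows $Y_{n,i} := X_i / s_n$ for $1 \leq i \leq n$; within each row the variables are independent and mean zero, and $\sum_{i=1}^n \textnormal{Var}(Y_{n,i}) = 1$, so $\Phi_n = \sum_{i=1}^n Y_{n,i}$.

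The key verification is Lindeberg's condition: for every fixed $\epsilon > 0$,
$$L_n(\epsilon) := \frac{1}{s_n^2} \sum_{i=1}^n \textnormal{E}\!\left[X_i^2 \cdot 1_{\{|X_i| > \epsilon s_n\}}\right] \longrightarrow 0.$$
Since $|X_i| \leq M$ deterministically, as soon as $n$ is large enough that $\epsilon s_n > M$ every indicator vanishes identically, forcing $L_n(\epsilon) = 0$. Invoking the Lindeberg-Feller theorem then yields that $\Phi_n$ converges in distribution to a standard normal, which is precisely $\lim_{n \to \infty} P(\Phi_n \leq \gamma) = \Phi(\gamma)$ for every $\gamma \in \mathbb{R}$.

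If a self-contained treatment is preferred over invoking Lindeberg-Feller as a black box, I would instead argue via characteristic functions. The boundedness $|X_i| \leq M$ yields $\textnormal{E}[|X_i|^3] \leq M \cdot \textnormal{Var}(X_i)$, so Taylor expansion together with $\textnormal{E}[X_i] = 0$ gives
$$\varphi_{X_i}(t/s_n) = 1 - \frac{t^2 \textnormal{Var}(X_i)}{2 s_n^2} + O\!\left(\frac{M t^3 \textnormal{Var}(X_i)}{s_n^3}\right).$$
Taking logarithms of $\varphi_{\Phi_n}(t) = \prod_{i=1}^n \varphi_{X_i}(t/s_n)$ and summing the error, which is $O(M t^3 / s_n^3) \cdot \sum_i \textnormal{Var}(X_i) = O(M t^3 / s_n) \to 0$, shows that $\log \varphi_{\Phi_n}(t) \to -t^2/2$; L\'evy's continuity theorem then delivers the stated convergence. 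There is no real obstacle; the only substantive observation is that boundedness shuts off the Lindeberg tail (equivalently, controls the third moment uniformly), and the rest is standard CLT machinery.
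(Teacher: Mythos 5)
Your argument is correct, and it is worth noting that the paper itself gives no proof of Fact~\ref{fact5}: it is stated in the probability-review section as ``a special case of the Central Limit Theorem'' with a pointer to \cite{liu}, so there is no in-paper proof to compare against. Your Lindeberg--Feller reduction is precisely the standard way to justify it. The key observation is the one you isolate: with $M := \sup_i \sup\{|x| : x \in \textnormal{Im}(X_i)\} < \infty$ and $s_n^2 := \sum_{i\leq n}\textnormal{Var}(X_i) \to \infty$, the indicator $1_{\{|X_i|>\epsilon s_n\}}$ is identically zero for every $i$ once $\epsilon s_n > M$, so $L_n(\epsilon)=0$ for all large $n$ and the Lindeberg condition holds trivially. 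Your characteristic-function variant is also sound; the one step you leave implicit and should state is that $\textnormal{Var}(X_i) \le M^2$ for every $i$, which gives $\bigl|\varphi_{X_i}(t/s_n)-1\bigr| \le t^2 M^2/(2s_n^2)$ uniformly in $i\le n$, so for $n$ large the factors lie in a fixed neighbourhood of $1$ and the principal branch of the logarithm can be taken term by term before summing the error $O\!\left(|t|^3 M/s_n\right)\to 0$. Either route is a complete and correct proof of the fact the paper takes as known.
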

\section{Lemmata}
In this section, we list all the lemmas required to prove our theorems. The first three lemmas establish statements equivalent to \thmref{yurugen}, and thus proving any equivalent statement would be sufficient in proving the theorem. The next two lemmas establish results necessary to prove one of the equivalent conditions mentioned in the third lemma of this section. Together, these lemmas prove \thmref{yurugen} in the next section. The final lemma in this section establishes results involving ideals required to complete the proofs of \thmref{hfullideals}, \thmref{erdoskachfreeideal}, and \thmref{erdoskachfullideal}.

Let $\mathcal{P}$, $\mathcal{M}$, and $\mathcal{S}$ be defined as in Section 1 and assume that they satisfy \eqref{Scondition} and the conditions (a) to (f). For $\mathfrak{m} \in \mathcal{S}$ and $x > 1$, we define
$$P_{\mathcal{S},x} \{ \mathfrak{m} \ : \ \mathfrak{m} \text{ satisfies some conditions} \}$$
to be the quantity 
$$\frac{1}{|\mathcal{S}(x)|} \left| \{ \mathfrak{m} \in \mathcal{S}(x) \ : \  \mathfrak{m} \text{ satisfies some conditions} \} \right|.$$
Note that $P_{\mathcal{S},x}$ is a probability measure on $\mathcal{S}$. Let $g$ be a function from $\mathcal{S}$ to $\mathbb{R}$. The expectation of $g$ with respect to $P_{\mathcal{S},x}$ is denoted by
$$\textnormal{E}_{\mathcal{S},x} \{ g \} := \frac{1}{|\mathcal{S}(x)|} \sum_{\mathfrak{m} \in \mathcal{S}(x)} g(\mathfrak{m}).$$

The first lemma gives an equivalent statement of \thmref{yurugen}.
\begin{lma}\label{linkN(m)x}
    $$\lim_{x \rightarrow \infty} P_{\mathcal{S},x} \bigg\{ \mathfrak{m} \ : \ N(\mathfrak{m}) \geq 3, \ \frac{\omega(\mathfrak{m}) - \log \log N(\mathfrak{m})}{\sqrt{\log \log N(\mathfrak{m})}} \leq \gamma \bigg\} = \Phi(\gamma)$$
    if and only if
    $$\lim_{x \rightarrow \infty} P_{\mathcal{S},x} \bigg\{ \mathfrak{m} \ : \ \frac{\omega(\mathfrak{m}) - \log \log x}{\sqrt{\log \log x}} \leq \gamma \bigg\} = \Phi(\gamma).$$
\end{lma}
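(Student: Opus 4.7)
The plan is to reduce the equivalence to Fact~\ref{fact2}. Set
\[
U_x(\mathfrak{m}) := \frac{\omega(\mathfrak{m}) - \log\log x}{\sqrt{\log\log x}}, \qquad
V_x(\mathfrak{m}) := \frac{\omega(\mathfrak{m}) - \log\log N(\mathfrak{m})}{\sqrt{\log\log N(\mathfrak{m})}},
\]
and observe the algebraic identity $V_x(\mathfrak{m}) = X_x(\mathfrak{m}) \cdot U_x(\mathfrak{m}) + Y_x(\mathfrak{m})$, where
\[
X_x(\mathfrak{m}) := \sqrt{\frac{\log\log x}{\log\log N(\mathfrak{m})}}, \qquad
Y_x(\mathfrak{m}) := \frac{\log\log x - \log\log N(\mathfrak{m})}{\sqrt{\log\log N(\mathfrak{m})}}.
\]
Since $|\mathcal{S}(x)| \to \infty$ (as $\mathcal{S}$ is infinite and only finitely many ideals have bounded norm), the at most finitely many $\mathfrak{m}$ with $N(\mathfrak{m}) < 3$ contribute $o(1)$ under $P_{\mathcal{S},x}$, so I may safely work on the event $\{N(\mathfrak{m}) \geq 3\}$ where $X_x, Y_x, V_x$ are all well-defined. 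By Fact~\ref{fact2}, the lemma will then follow provided
\[
X_x \xlongrightarrow{P} 1 \qquad \text{and} \qquad Y_x \xlongrightarrow{P} 0
\]
with respect to $P_{\mathcal{S},x}$.

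To verify these convergences I would invoke condition \eqref{Scondition}, which directly gives
\[
P_{\mathcal{S},x}\{\mathfrak{m} \in \mathcal{S}(x) : N(\mathfrak{m}) \leq x^{1/2}\} = \frac{|\mathcal{S}(x^{1/2})|}{|\mathcal{S}(x)|} = o(1).
\]
On the complementary event, $\log N(\mathfrak{m}) > \tfrac{1}{2}\log x$, so that
\[
0 \leq \log\log x - \log\log N(\mathfrak{m}) = \log\!\frac{\log x}{\log N(\mathfrak{m})} \leq \log 2,
\]
and hence $\log\log N(\mathfrak{m}) = \log\log x + O(1)$ uniformly. Consequently $|Y_x(\mathfrak{m})| \ll (\log\log x)^{-1/2} \to 0$ and $X_x(\mathfrak{m}) = 1 + O((\log\log x)^{-1}) \to 1$ on that event. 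Combined with the $o(1)$ probability of the exceptional event, both desired convergences in probability follow.

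The heart of the argument is the identity $V_x = X_x U_x + Y_x$, which recasts the problem as one about the fluctuation of $\log\log N(\mathfrak{m})$ around $\log\log x$ on $\mathcal{S}(x)$; hypothesis \eqref{Scondition} is the precise tool needed to force $N(\mathfrak{m})$ to lie near $x$ on a double-logarithmic scale for $P_{\mathcal{S},x}$-almost every $\mathfrak{m}$. Since Fact~\ref{fact2} is itself an ``if and only if'' statement, both directions of the equivalence drop out simultaneously. There is no serious obstacle: the lemma is essentially a bookkeeping exercise showing that, on the relevant probability scale, replacing $\log\log N(\mathfrak{m})$ by $\log\log x$ in the Erd\H{o}s--Kac normalization does not affect the limiting distribution.
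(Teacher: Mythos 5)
Your proof is correct and follows essentially the same route as the paper: decompose one normalization as an affine function of the other, invoke Fact~\ref{fact2}, and use \eqref{Scondition} together with the bound $0 \leq \log\log x - \log\log N(\mathfrak{m}) \leq \log 2$ on the event $N(\mathfrak{m}) > x^{1/2}$ to get the required convergences in probability. The only cosmetic differences are that the paper writes the $\log\log x$-normalized quantity in terms of the $\log\log N(\mathfrak{m})$-normalized one (rather than the reverse) and verifies the negligibility of the deviation events by a contrapositive argument, which is equivalent to your direct estimate.
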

\begin{proof}
The proof closely follows the steps of the proof of \cite[Lemma 3]{liu}. First note that
\begin{align*}
    \frac{\omega(\mathfrak{m}) - \log \log x}{\sqrt{\log \log x}} & = \frac{\omega(\mathfrak{m}) - \log \log N(\mathfrak{m})}{\sqrt{\log \log N(\mathfrak{m})}} \frac{\sqrt{\log \log N(\mathfrak{m})}}{\sqrt{\log \log x}} \\
    & \hspace{.5cm} + \frac{\log \log N(\mathfrak{m}) - \log \log x}{\sqrt{\log \log x}}.
\end{align*}
Thus by Fact 2 and our assumption that $\mathcal{S}$ is infinite, to prove the lemma, it suffices to show that for any $\epsilon >0$, 
$$\lim_{x \rightarrow \infty} P_{\mathcal{S},x} \bigg\{ \mathfrak{m} \ : \ N(\mathfrak{m}) \geq 3, \ \bigg| \frac{\sqrt{\log \log N(\mathfrak{m})}}{\sqrt{\log \log x}} - 1 \bigg| > \epsilon \bigg\} = 0 $$
and
$$\lim_{x \rightarrow \infty} P_{\mathcal{S},x} \bigg\{ \mathfrak{m} \ : \ N(\mathfrak{m}) \geq 3, \ \bigg| \frac{\log \log N(\mathfrak{m}) - \log \log x}{\sqrt{\log \log x}} \bigg| > \epsilon \bigg\} = 0.$$
Consider $\mathfrak{m} \in \mathcal{M}$ satisfying $3 \leq x^{1/2} < N(\mathfrak{m}) \leq x$. If we have
$$\frac{\sqrt{\log \log N(\mathfrak{m})}}{\sqrt{\log \log x}} < 1 - \epsilon,$$
it follows that
$$(\log \log x - \log 2)^{1/2} < (\log \log N(\mathfrak{m}))^{1/2} < (1 - \epsilon) (\log \log x)^{1/2}.$$
Taking squares on both the sides above, we get
$$\frac{1}{(1-\epsilon)^2} (\log \log x - \log 2) < \log \log x  \quad \longleftrightarrow \quad \log \log x < \frac{\log 2}{\epsilon (2 - \epsilon)}.$$
Similarly, for $\mathfrak{m} \in \mathcal{M}$ satisfying $3 \leq x^{1/2} < N(\mathfrak{m}) \leq x$, if we have
$$\frac{\log \log x - \log \log N(\mathfrak{m})}{\sqrt{\log \log x}} > \epsilon,$$
it implies that
$$\log \log x < \left( \frac{\log 2}{\epsilon} \right)^2.$$
Thus, there exists sufficiently large $x(\epsilon) \in X$ such that for all $x \in X$ with $x \geq x(\epsilon)$, we have
$$P_{\mathcal{S},x} \bigg\{ \mathfrak{m} \ : \ N(\mathfrak{m}) \geq 3, \ \left| \frac{\sqrt{\log \log N(\mathfrak{m})}}{\sqrt{\log \log x}} - 1 \right| > \epsilon \bigg\} \leq P_{\mathcal{S},x} \{ \mathfrak{m} \ : \ N(\mathfrak{m}) \leq x^{1/2} \}$$
and
$$P_{\mathcal{S},x} \bigg\{ \mathfrak{m} \ : \ N(\mathfrak{m}) \geq 3, \ \bigg| \frac{\log \log N(\mathfrak{m}) - \log \log x}{\sqrt{\log \log x}} \bigg| > \epsilon \bigg\} \leq P_{\mathcal{S},x} \{ \mathfrak{m} \ : \ N(\mathfrak{m}) \leq x^{1/2} \}.$$
Applying \eqref{Scondition}, we obtain
$$P_{\mathcal{S},x} \{ \mathfrak{m} \ : \ N(\mathfrak{m}) \leq x^{1/2} \} = o(1).$$
This completes the proof.
\end{proof}
\begin{rmk}
    Note that, we need to use condition \eqref{Scondition} to complete the last step of the previous proof. In fact, this is the only place where \eqref{Scondition} is applied. Moreover, the lemma still holds if 1/2 is replaced by any constant between 0 and 1. Thus, we can replace \eqref{Scondition} by
    $$|\mathcal{S}(x^c)| = o(|\mathcal{S}(x)|),$$
    where $0 < c < 1$.
\end{rmk}

Let $\beta$ be a constant with $0 < \beta \leq 1$ and $y = y(x) < x^\beta$ satisfying the conditions (a)-(f) as mentioned in Section 1. For $\mathfrak{m} \in \mathcal{M}$, we define the truncated function
$$\omega_y(\mathfrak{m}) = \left| \{ \wp \in \mathcal{P} \ : \ N(\wp) \leq y, \ \wp | \mathfrak{m} \} \right|.$$

The next result establishes another equivalent formulation of the Erd\H{o}s-Kac theorem in terms of $\omega_y$. 

\begin{lma}\label{linkomegay}
    $$\lim_{x \rightarrow \infty} P_{\mathcal{S},x} \bigg\{ \mathfrak{m} \ : \ \frac{\omega(\mathfrak{m}) - \log \log x}{\sqrt{\log \log x}} \leq \gamma \bigg\} = \Phi(\gamma)$$
    if and only if
    $$\lim_{x \rightarrow \infty} P_{\mathcal{S},x} \bigg\{ \mathfrak{m} \ : \ \frac{\omega_y(\mathfrak{m}) - \log \log x}{\sqrt{\log \log x}} \leq \gamma \bigg\} = \Phi(\gamma).$$
\end{lma}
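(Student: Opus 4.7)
The plan is to deduce the equivalence from Fact~\ref{fact2} by showing that the difference $(\omega(\mathfrak{m}) - \omega_y(\mathfrak{m}))/\sqrt{\log\log x}$ converges in probability (with respect to $P_{\mathcal{S},x}$) to $0$. Set
\[
U_x(\mathfrak{m}) = \frac{\omega(\mathfrak{m}) - \log\log x}{\sqrt{\log\log x}}, \qquad V_x(\mathfrak{m}) = \frac{\omega_y(\mathfrak{m}) - \log\log x}{\sqrt{\log\log x}},
\]
so that $V_x = X_x U_x + Y_x$ with $X_x \equiv 1$ and $Y_x(\mathfrak{m}) = (\omega_y(\mathfrak{m})-\omega(\mathfrak{m}))/\sqrt{\log\log x}$. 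Since $X_x \equiv 1 \xrightarrow{P} 1$ trivially, once I show $Y_x \xrightarrow{P} 0$, Fact~\ref{fact2} applied with $U = \Phi$ gives exactly the claimed equivalence.

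To show $Y_x \xrightarrow{P} 0$, by Fact~\ref{fact1} it suffices to verify that
\[
\textnormal{E}_{\mathcal{S},x}\!\left\{ |\omega(\mathfrak{m}) - \omega_y(\mathfrak{m})| \right\} = o\!\left( (\log\log x)^{1/2} \right).
\]
I split the difference according to the size of $N(\wp)$:
\[
\omega(\mathfrak{m}) - \omega_y(\mathfrak{m}) = \sum_{\substack{\wp\mid\mathfrak{m} \\ y < N(\wp) \leq x^\beta}} 1 \; + \; \sum_{\substack{\wp\mid\mathfrak{m} \\ N(\wp) > x^\beta}} 1 .
\]
By hypothesis (a), the second sum is $O_\beta(1)$ uniformly for every $\mathfrak{m}\in\mathcal{S}(x)$, so it contributes $O_\beta(1) = o((\log\log x)^{1/2})$ to the expectation. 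For the first sum, I interchange the sums over $\mathfrak{m}$ and $\wp$ and use the definition \eqref{eq_lambdap} to obtain
\[
\textnormal{E}_{\mathcal{S},x}\!\left\{ \sum_{\substack{\wp\mid\mathfrak{m} \\ y < N(\wp) \leq x^\beta}} 1 \right\} = \sum_{y < N(\wp) \leq x^\beta} P_{\mathcal{S},x}\{\mathfrak{m} : \wp\mid\mathfrak{m}\} = \sum_{y < N(\wp) \leq x^\beta} \bigl( \lambda_\wp + e_\wp \bigr).
\]
Applying hypothesis (b) to the $\lambda_\wp$ part and hypothesis (c) to the $|e_\wp|$ part, both contributions are $o((\log\log x)^{1/2})$, and combining the two pieces gives the required estimate.

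The argument is largely bookkeeping; the only point to be a little careful about is that the inner sum in the middle range is nonnegative, so passing from the sum itself to its absolute value costs nothing, and the splitting at $x^\beta$ is precisely what lets conditions (a)–(c) cover the full range $N(\wp) > y$. I do not expect any genuine obstacle: the main content of the lemma is in packaging the hypotheses (a)–(c) into an $L^1$ bound on the truncation error, and then invoking the general probabilistic Facts~\ref{fact1} and~\ref{fact2} to convert this into the equivalence of the two limiting distributional statements.
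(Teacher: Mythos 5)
Your proposal is correct and follows essentially the same route as the paper: decompose via Fact~\ref{fact2} with the trivial multiplicative factor, reduce by Fact~\ref{fact1} to an $L^1$ bound on $\omega - \omega_y$, and split the prime range at $x^\beta$ so that conditions (a), (b), (c) cover the three pieces. The only cosmetic difference is that the paper works with the full sum $\sum_{\mathfrak{m}\in\mathcal{S}(x)}|\omega(\mathfrak{m})-\omega_y(\mathfrak{m})|$ and divides at the end, whereas you phrase it directly as an expectation; the content is identical.
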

\begin{proof}
    Note that
    $$\frac{\omega_y(\mathfrak{m}) - \log \log x}{\sqrt{\log \log x}} = \frac{\omega(\mathfrak{m}) - \log \log x}{\sqrt{\log \log x}} + \frac{\omega_y(\mathfrak{m}) - \omega(\mathfrak{m})}{\sqrt{\log \log x}}.$$
    Thus, by Fact 1 and Fact 2, to prove the lemma, it suffices to prove
    $$\lim_{x \rightarrow \infty} \textnormal{E}_{\mathcal{S},x} \bigg\{ \mathfrak{m} \ : \ \bigg| \frac{\omega(\mathfrak{m}) - \omega_y(\mathfrak{m})}{\sqrt{\log \log x}} \bigg| \bigg\} = 0.$$ 
    Notice that
    \begin{align*}
        & \sum_{\substack{\mathfrak{m} \in \mathcal{S} \\ N(\mathfrak{m}) \leq x}}  |\omega(\mathfrak{m}) - \omega_y(\mathfrak{m})| \\
        & = \sum_{\substack{\mathfrak{m} \in \mathcal{S} \\ N(\mathfrak{m}) \leq x}}  \sum_{\substack{\wp \in \mathcal{P} \\ N(\wp) > y, \ \wp | \mathfrak{m} }} 1  \\
        & = \sum_{\substack{\wp \in \mathcal{P} \\ y < N(\wp) \leq x^\beta }} \sum_{\substack{\mathfrak{m} \in \mathcal{S} \\ N(\mathfrak{m}) \leq x, \ \wp | \mathfrak{m}}} 1 + \sum_{\substack{\mathfrak{m} \in \mathcal{S} \\ N(\mathfrak{m}) \leq x}}  \sum_{\substack{\wp \in \mathcal{P} \\ N(\wp) > x^\beta, \ \wp | \mathfrak{m}}} 1.
    \end{align*}
Using the definition of $\lambda_\wp$ and $e_\wp$, and the conditions (a), (b), and (c), we obtain
\begin{align*}
    \sum_{\substack{\mathfrak{m} \in \mathcal{S} \\ N(\mathfrak{m}) \leq x}}  |\omega(\mathfrak{m}) - \omega_y(\mathfrak{m})| & = \sum_{\substack{\wp \in \mathcal{P} \\ y < N(\wp) \leq x^\beta }} |\mathcal{S}(x)|(\lambda_\wp+ e_\wp) + O(|\mathcal{S}(x)|) \\
    & = o(|\mathcal{S}(x)|(\log \log x)^{1/2}) + O(|\mathcal{S}(x)|).
\end{align*}
Thus, we have
$$\textnormal{E}_{\mathcal{S},x} \bigg\{ \mathfrak{m} \ : \ \bigg| \frac{\omega(\mathfrak{m}) - \omega_y(\mathfrak{m})}{\sqrt{\log \log x}} \bigg| \bigg\} = \frac{o(|\mathcal{S}(x)|(\log \log x)^{1/2})}{|\mathcal{S}(x)|(\log \log x)^{1/2}} = o(1),$$
which completes the proof.
\end{proof}
For $\wp \in \mathcal{P}$, let $\lambda_{\wp}$ be the constant (depending on $\wp$) mentioned in \eqref{eq_lambdap}. We define the independent random variable $X_\wp$ by
$$P(X_\wp =1) = \lambda_\wp$$
and
$$P(X_\wp=0) = 1 - \lambda_\wp.$$
We define a new random variable $\mathcal{S}_y$ by
$$\mathcal{S}_y := \sum_{N(\wp) \leq y} X_\wp.$$
Note that, by conditions (d) and (e), we have the expectation and variance of the random variable $\mathcal{S}_y$ as
$$\text{E}(\mathcal{S}_y) = \sum_{N(\wp) \leq y} \lambda_\wp = \log \log x + o \left( (\log \log x)^{1/2} \right),$$
and
$$\text{Var}(\mathcal{S}_y) = \sum_{N(\wp) \leq y} \lambda_\wp(1 - \lambda_\wp) = \log \log x + o \left( (\log \log x)^{1/2} \right).$$
Note that, we will use the notation $\textnormal{E}(\cdot)$ and $\textnormal{E}_{S,x}\{\cdot\}$ respectively to distinguish the expectation of a random variable from the expectation of a function with respect to $P_{S,x}$. However, in most cases, they will represent the same values.

The above setup leads us to another reformulation of \thmref{yurugen} in terms of $\textnormal{E}(\mathcal{S}_y)$.
\begin{lma}\label{linkomegayESy}
$$\lim_{x \rightarrow \infty} P_{\mathcal{S},x} \bigg\{ m \ : \ \frac{\omega_y(\mathfrak{m}) - \log \log x}{\sqrt{\log \log x}} \leq \gamma \bigg\} = \Phi(\gamma)$$
if and only if
$$\lim_{x \rightarrow \infty} P_{\mathcal{S},x} \bigg\{ \mathfrak{m} \ : \ \frac{\omega_y(\mathfrak{m}) - \textnormal{E}(\mathcal{S}_y)}{\sqrt{\textnormal{Var}(\mathcal{S}_y)}} \leq \gamma \bigg\} = \Phi(\gamma).$$
\end{lma}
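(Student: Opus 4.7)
The plan is to apply Fact~\ref{fact2} in the same spirit as in the proofs of Lemmas~\ref{linkN(m)x} and \ref{linkomegay}, treating $\omega_y(\mathfrak{m})$ as the random variable $U_n$ whose normalization we are comparing under two different centerings and scalings. First I would write the algebraic identity
$$\frac{\omega_y(\mathfrak{m}) - \textnormal{E}(\mathcal{S}_y)}{\sqrt{\textnormal{Var}(\mathcal{S}_y)}} = \frac{\omega_y(\mathfrak{m}) - \log \log x}{\sqrt{\log \log x}} \cdot \frac{\sqrt{\log \log x}}{\sqrt{\textnormal{Var}(\mathcal{S}_y)}} + \frac{\log \log x - \textnormal{E}(\mathcal{S}_y)}{\sqrt{\textnormal{Var}(\mathcal{S}_y)}},$$
which is exactly the form $U_n X_n + Y_n$ required by Fact~\ref{fact2}, with $U_n$ the first factor of the left-hand side of the first normalization, $X_n$ the ratio $\sqrt{\log\log x}/\sqrt{\textnormal{Var}(\mathcal{S}_y)}$, and $Y_n$ the final additive term.

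Next I would verify the two convergence hypotheses of Fact~\ref{fact2}, namely $X_n \xrightarrow{P} 1$ and $Y_n \xrightarrow{P} 0$. These are in fact deterministic quantities (they do not depend on $\mathfrak{m}$), so convergence in probability reduces to ordinary numerical convergence. From the computations already displayed in the excerpt, conditions (d) and (e) give
$$\textnormal{E}(\mathcal{S}_y) = \log \log x + o\bigl( (\log \log x)^{1/2} \bigr), \qquad \textnormal{Var}(\mathcal{S}_y) = \log \log x + o\bigl( (\log \log x)^{1/2} \bigr).$$
Dividing the second relation by $\log\log x$ and taking square roots yields $\sqrt{\log\log x}/\sqrt{\textnormal{Var}(\mathcal{S}_y)} \to 1$, so $X_n \to 1$. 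For $Y_n$, the first estimate gives $\log\log x - \textnormal{E}(\mathcal{S}_y) = o((\log\log x)^{1/2})$, while $\sqrt{\textnormal{Var}(\mathcal{S}_y)} \sim \sqrt{\log\log x}$, so the quotient is $o(1)$ and hence $Y_n \to 0$.

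With both hypotheses in hand, Fact~\ref{fact2} directly gives the desired equivalence: for every $\gamma \in \mathbb{R}$,
$$\lim_{x \to \infty} P_{\mathcal{S},x}\bigg\{ \mathfrak{m} \ : \ \frac{\omega_y(\mathfrak{m}) - \log\log x}{\sqrt{\log\log x}} \leq \gamma \bigg\} = \Phi(\gamma)$$
holds if and only if the analogous statement with $\log\log x$ replaced by $\textnormal{E}(\mathcal{S}_y)$ and $\sqrt{\log\log x}$ replaced by $\sqrt{\textnormal{Var}(\mathcal{S}_y)}$ holds. There is no real obstacle here; the only subtlety is to make sure that Fact~\ref{fact2} is stated in a form that allows a deterministic $X_n$ and $Y_n$ (which it does, since constants converge in probability to themselves), and that the ratio $\sqrt{\log\log x}/\sqrt{\textnormal{Var}(\mathcal{S}_y)}$ is well defined, which follows because $\textnormal{Var}(\mathcal{S}_y) \to \infty$ as $x \to \infty$ by condition (d).
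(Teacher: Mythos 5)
Your proof is correct and takes essentially the same approach as the paper: the same algebraic decomposition, the same verification that the multiplicative factor tends to $1$ and the additive term tends to $0$ using conditions (d) and (e), and the same appeal to Fact~\ref{fact2} (with the paper invoking Fact~\ref{fact1} explicitly while you note the terms are deterministic, which is a cosmetic simplification). The only microscopic inaccuracy is attributing $\textnormal{Var}(\mathcal{S}_y)\to\infty$ to condition (d) alone, when it actually uses both (d) and (e) via $\textnormal{Var}(\mathcal{S}_y)=\sum\lambda_\wp-\sum\lambda_\wp^2$.
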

\begin{proof}
Note that
$$\frac{\omega_y(\mathfrak{m}) - \textnormal{E}(\mathcal{S}_y)}{\sqrt{\textnormal{Var}(\mathcal{S}_y)}} = \frac{\omega_y(\mathfrak{m}) - \log \log x}{\sqrt{\log \log x}} \frac{\sqrt{\log \log x}}{\sqrt{\textnormal{Var}(\mathcal{S}_y)}} + \frac{\log \log x - \textnormal{E}(\mathcal{S}_y)}{\sqrt{\textnormal{Var}(\mathcal{S}_y)}}.$$
Since
$$\text{Var}(\mathcal{S}_y) = \sum_{N(\wp) \leq y} \lambda_\wp(1 - \lambda_\wp) = \log \log x + o \left( (\log \log x)^{1/2} \right),$$
we have
$$\frac{\sqrt{\log \log x}}{\sqrt{\textnormal{Var}(\mathcal{S}_y)}} \xlongrightarrow{P} 1,$$
where $\xlongrightarrow{P}$ denotes the convergence in probability. Moreover, since 
$$\text{E}(\mathcal{S}_y) = \sum_{N(\wp) \leq y} \lambda_\wp = \log \log x + o \left( (\log \log x)^{1/2} \right),$$
we obtain
$$\lim_{x \rightarrow \infty} \textnormal{E}_{\mathcal{S},x} \bigg\{ \mathfrak{m} \ : \  \bigg| \frac{\textnormal{E}(\mathcal{S}_y) - \log \log x}{\sqrt{\textnormal{Var}(\mathcal{S}_y)}} \bigg| \bigg\} = 0.$$
Finally, by using Fact 1 and Fact 2, we complete the proof of the equivalence mentioned in the lemma.
\end{proof}
Next, we introduce another set of random variables. For $\wp \in \mathcal{P}$, we define a random variable $\delta_\wp: \mathcal{M} \rightarrow \mathbb{R}$ by
$$\delta_\wp(\mathfrak{m}) := \begin{cases}
    1 & \text{if } \wp | \mathfrak{m}, \\
    0 & \text{otherwise.}
\end{cases}$$
Thus, we can write
$$\omega_y(\mathfrak{m}) = \sum_{\substack{\wp \in \mathcal{P} \\ N(\wp) \leq y, \ \wp | \mathfrak{m} }} 1 = \sum_{\substack{\wp \in \mathcal{P} \\ N(\wp) \leq y}} \delta_\wp(\mathfrak{m}).$$
Notice that for a fixed $\wp \in \mathcal{P}$ and $x \in X$, by definition, we have
$$P_{\mathcal{S},x} \{ \mathfrak{m} \ : \ \delta_\wp(\mathfrak{m}) = 1 \} = \lambda_\wp + e_\wp.$$
Since the expectations of random variables $X_\wp$ and $\delta_\wp$ are close, the sum $\mathcal{S}_y$ is a good approximation of $\omega_y$. Indeed, the $r$-th moments of their normalizations are equal as $x \rightarrow \infty$, which we prove in the following result.
\begin{lma}\label{rthmoment}
 Let $r \in \mathbb{N}$. We have
 $$\lim_{x \rightarrow \infty} \bigg| \textnormal{E}_{\mathcal{S},x} \bigg\{ \bigg( \frac{\omega_y(\mathfrak{m}) - \textnormal{E}(\mathcal{S}_y)}{\sqrt{\textnormal{Var}(\mathcal{S}_y)}} \bigg)^r \bigg\}- E\bigg( \bigg( \frac{\mathcal{S}_y - \textnormal{E}(\mathcal{S}_y)}{\sqrt{\textnormal{Var}(\mathcal{S}_y)}} \bigg)^r\bigg) \bigg| = 0.$$
\end{lma}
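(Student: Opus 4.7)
The plan is to reduce the claim to a comparison of the non-central moments $\textnormal{E}_{\mathcal{S},x}\{\omega_y^k\}$ and $\textnormal{E}(\mathcal{S}_y^k)$ for $k \in \{0,1,\ldots,r\}$, and then to control each such difference directly via condition (f). Since $\textnormal{E}(\mathcal{S}_y)$ and $\textnormal{Var}(\mathcal{S}_y)$ depend only on $x$, they commute with the expectations, and in view of $\textnormal{Var}(\mathcal{S}_y) = \log\log x + o((\log\log x)^{1/2})$ from condition (e), it suffices to prove
$$\bigl| \textnormal{E}_{\mathcal{S},x}\{(\omega_y(\mathfrak{m}) - \textnormal{E}(\mathcal{S}_y))^r\} - \textnormal{E}\bigl((\mathcal{S}_y - \textnormal{E}(\mathcal{S}_y))^r\bigr) \bigr| = o\bigl((\log\log x)^{r/2}\bigr).$$
Expanding both centred powers by the binomial theorem and using linearity, this difference equals
$$\sum_{k=0}^{r} \binom{r}{k} (-\textnormal{E}(\mathcal{S}_y))^{r-k} \bigl( \textnormal{E}_{\mathcal{S},x}\{\omega_y^k\} - \textnormal{E}(\mathcal{S}_y^k) \bigr),$$
so it is enough to bound each moment difference.

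For the moment comparison, I would expand $\omega_y^k = \bigl(\sum_{N(\wp)\le y} \delta_\wp\bigr)^k$ and $\mathcal{S}_y^k = \bigl(\sum_{N(\wp)\le y} X_\wp\bigr)^k$ as sums over ordered $k$-tuples of primes, and group each tuple by its set of distinct prime entries $\{\wp_1^*, \ldots, \wp_u^*\}$ with $u \le k$. Since both $\delta_\wp$ and $X_\wp$ are $\{0,1\}$-valued, one has $\delta_\wp^m = \delta_\wp$ and $X_\wp^m = X_\wp$ for every $m \ge 1$, which collapses each product to $\prod_{j=1}^u \delta_{\wp_j^*}$ (respectively $\prod_{j=1}^u X_{\wp_j^*}$). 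By \eqref{eq_lambdap} and the discussion following it, the $\mathcal{S}$-expectation of the first product is $\prod_j \lambda_{\wp_j^*} + e_{\wp_1^* \cdots \wp_u^*}$, while by independence of the $X_\wp$'s the second evaluates to $\prod_j \lambda_{\wp_j^*}$, so the contributions of a given $k$-tuple differ by exactly $e_{\wp_1^* \cdots \wp_u^*}$. Regrouping yields
$$\bigl| \textnormal{E}_{\mathcal{S},x}\{\omega_y^k\} - \textnormal{E}(\mathcal{S}_y^k) \bigr| \le \sum_{u=1}^{k} S(k,u) \sum{\vphantom{\sum}}'' |e_{\wp_1 \cdots \wp_u}|,$$
where $S(k,u)$ is the Stirling number of the second kind, depending only on $k$ and $u$ and not on $x$. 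Condition (f), applied with the outer parameter $r$, then yields $|\textnormal{E}_{\mathcal{S},x}\{\omega_y^k\} - \textnormal{E}(\mathcal{S}_y^k)| = o((\log\log x)^{-r/2})$ for each $k \in \{1, \ldots, r\}$, while the $k=0$ contribution vanishes identically.

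Substituting back and using $\textnormal{E}(\mathcal{S}_y) = O(\log\log x)$ from condition (d), each $k \ge 1$ term in the binomial expansion is bounded by $\binom{r}{k} (\log\log x)^{r-k} \cdot o((\log\log x)^{-r/2})$, with the worst case at $k = 1$ giving $o((\log\log x)^{r/2 - 1})$, which is well within $o((\log\log x)^{r/2})$ as required. The main obstacle I anticipate is the combinatorial bookkeeping in the regrouping step: one must track the multiplicity pattern $(m_1, \ldots, m_u)$ of distinct primes in each $k$-tuple and verify that the number of $k$-tuples whose distinct-prime set reduces to a prescribed ordered $u$-tuple is the purely combinatorial quantity $S(k,u)$, independent of $x$ and of the specific primes. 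Once this identification is made, the strength of condition (f)---that the total error sum over ordered distinct $u$-tuples is smaller than any negative power of $\log\log x$---easily absorbs the polynomial factor $(\log\log x)^{r-k}$ coming from $(-\textnormal{E}(\mathcal{S}_y))^{r-k}$.
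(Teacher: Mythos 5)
Your proposal is correct and follows essentially the same route as the paper: reduce to comparing non-central moments $\textnormal{E}_{\mathcal{S},x}\{\omega_y^k\}$ and $\textnormal{E}(\mathcal{S}_y^k)$ via the binomial expansion of the centred $r$-th powers, expand each $k$-th power over ordered $u$-tuples of distinct primes using $\delta_\wp^m = \delta_\wp$ and $X_\wp^m = X_\wp$, observe that the per-tuple discrepancy is exactly $e_{\wp_1\cdots\wp_u}$, and invoke condition (f) together with $\textnormal{E}(\mathcal{S}_y), \textnormal{Var}(\mathcal{S}_y) \sim \log\log x$. The only cosmetic difference is that you package the multiplicity of a $u$-tuple as the Stirling number $S(k,u)$, whereas the paper writes it as a sum of multinomial coefficients $k!/(k_1!\cdots k_u!)$ over compositions $k_1+\cdots+k_u=k$; these are the same combinatorial count, and either way the coefficients are bounded independently of $x$, which is all that matters for the $o((\log\log x)^{r/2})$ estimate.
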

\begin{proof}
    For $0 \leq k \leq r$, we write
    $$\textnormal{E}(\mathcal{S}_y^k) = \sum_{u=1}^k \left( \sum{\vphantom{\sum}}' \frac{k!}{k_1! \cdots k_u!} \left( \sum{\vphantom{\sum}}'' \textnormal{E}(X_{\wp_{\scaleto{1}{3pt}}}^{k_1} \cdots X_{\wp_{\scaleto{u}{3pt}}}^{k_u}) \right) \right),$$
    where $\sum{\vphantom{\sum}}'$ extends over all $u$-tuples $(k_1,k_2,\cdots,k_u)$ of positive integers such that $k_1+k_2+\cdots+k_u = k$ and $\sum{\vphantom{\sum}}''$ extends over all $u$-tuples $(\wp_{\scaleto{1}{3pt}},\wp_{\scaleto{2}{3pt}},\cdots,\wp_{\scaleto{u}{3pt}})$ with $N(\wp_i) \leq y$ and $\wp_i'$s are distinct. Since each $X_i$ takes values 0 or 1 and the $X_{\wp_i}'$s are independent, we have 
    $$\textnormal{E}(X_{\wp_{\scaleto{1}{3pt}}}^{k_1} \cdots X_{\wp_{\scaleto{u}{3pt}}}^{k_u}) = \lambda_{\wp_{\scaleto{1}{3pt}}}\cdot \lambda_{\wp_{\scaleto{2}{3pt}}} \cdot \cdots \cdot \lambda_{\wp_{\scaleto{u}{3pt}}}.$$
    Similarly, we have
    $$\textnormal{E}_{\mathcal{S},x} \{ \omega_y^k \} = \sum_{u=1}^k \left( \sum{\vphantom{\sum}}' \frac{k!}{k_1! \cdots k_u!} \left( \sum{\vphantom{\sum}}'' \textnormal{E}_{\mathcal{S},x} \{ \delta_{\wp_{\scaleto{1}{3pt}}}^{k_1} \cdots \delta_{\wp_{\scaleto{u}{3pt}}}^{k_u} \} \right) \right),$$
    with $\sum{\vphantom{\sum}}'$ and $\sum{\vphantom{\sum}}''$ defined as above. Notice that, by the definition of $\lambda_\wp$ and $e_\wp$, we have
    $$|\textnormal{E}(X_{\wp_{\scaleto{1}{3pt}}}^{k_1} \cdots X_{\wp_{\scaleto{u}{3pt}}}^{k_u}) - \textnormal{E}_{\mathcal{S},x} \{ \delta_{\wp_{\scaleto{1}{3pt}}}^{k_1} \cdots \delta_{\wp_{\scaleto{u}{3pt}}}^{k_u} \}| = |e_{\wp_{\scaleto{1}{3pt}} \cdots \wp_{\scaleto{u}{3pt}}}|.$$
    Next, we write
    $$\textnormal{E}((\mathcal{S}_y - E(\mathcal{S}_y))^r) = \sum_{k=0}^r \binom{r}{k} \textnormal{E}(\mathcal{S}_y^k) \cdot \textnormal{E}(\mathcal{S}_y)^{r-k}$$
    and
    $$\textnormal{E}_{\mathcal{S},x}\{(\omega_y - E(\mathcal{S}_y))^r \} = \sum_{k=0}^r \binom{r}{k} \textnormal{E}_{\mathcal{S},x} \{ \omega_y^k \} \cdot \textnormal{E}(\mathcal{S}_y)^{r-k}.$$
    Since
    $$\textnormal{E}(\mathcal{S}_y) = \log \log x + o((\log \log x)^{1/2}),$$
    using Condition (f), we obtain
    \begin{align*}
        & | \textnormal{E}_{\mathcal{S},x}\{(\omega_y - E(\mathcal{S}_y))^r \} - \textnormal{E}((\mathcal{S}_y - E(\mathcal{S}_y))^r)| \\
        & \ll \sum_{k=0}^r \binom{r}{k} \left( \sum_{u=1}^k \left( \sum{\vphantom{\sum}}' \frac{k!}{k_1! \cdots k_u!} \left( \sum{\vphantom{\sum}}'' |e_{\wp_{\scaleto{1}{3pt}} \cdots \wp_{\scaleto{u}{3pt}}}| \cdot (\log \log x)^{r-k} \right) \right) \right)\\
        & = o((\log \log x)^{r/2}).
    \end{align*}
    Since
    $$\text{Var}(\mathcal{S}_y) = \log \log x + o \left( (\log \log x)^{1/2} \right),$$
    the lemma follows.    
\end{proof}
The next result is about the $r$-th moment of the random variable $\mathcal{S}_y$.
\begin{lma}\label{finite_r-thmoment}
    For $r \in \mathbb{N}$,
    $$\sup_y \bigg| \textnormal{E} \bigg( \bigg(\frac{\mathcal{S}_y - \textnormal{E}(\mathcal{S}_y)}{\sqrt{\textnormal{Var}(\mathcal{S}_y)}} \bigg)^r \bigg) \bigg| < \infty.$$
\end{lma}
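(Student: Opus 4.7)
The plan is to expand the centered $r$-th moment as a multinomial sum and exploit that centering forces each prime to appear with multiplicity at least two. First I would set $Y_\wp := X_\wp - \lambda_\wp$ so that the $Y_\wp$'s are independent with $\textnormal{E}(Y_\wp) = 0$, $|Y_\wp| \leq 1$, and $\textnormal{E}(Y_\wp^2) = \lambda_\wp(1-\lambda_\wp)$. Letting $T_y := \mathcal{S}_y - \textnormal{E}(\mathcal{S}_y) = \sum_{N(\wp) \leq y} Y_\wp$, independence yields an expansion
\[
\textnormal{E}(T_y^r) = \sum_{u=1}^{r} \sum{\vphantom{\sum}}' \frac{r!}{k_1! \cdots k_u!} \sum{\vphantom{\sum}}'' \textnormal{E}(Y_{\wp_{\scaleto{1}{3pt}}}^{k_1}) \cdots \textnormal{E}(Y_{\wp_{\scaleto{u}{3pt}}}^{k_u}),
\]
of the same shape as the one used for $\textnormal{E}(\mathcal{S}_y^k)$ in the proof of \lmaref{rthmoment}, where $\sum{\vphantom{\sum}}'$ ranges over ordered compositions of $r$ into $u$ positive parts and $\sum{\vphantom{\sum}}''$ over ordered $u$-tuples of distinct primes of norm at most $y$.

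Next, since $\textnormal{E}(Y_\wp) = 0$, any term in which some $k_i = 1$ vanishes by independence, so only compositions with every $k_i \geq 2$ survive, which forces $u \leq \lfloor r/2 \rfloor$. For each such term, $|Y_\wp| \leq 1$ and $k_i \geq 2$ together imply $|\textnormal{E}(Y_\wp^{k_i})| \leq \textnormal{E}(Y_\wp^2)$; dropping the distinctness constraint on the primes (all summands being non-negative) then yields
\[
\sum{\vphantom{\sum}}'' |\textnormal{E}(Y_{\wp_{\scaleto{1}{3pt}}}^{k_1}) \cdots \textnormal{E}(Y_{\wp_{\scaleto{u}{3pt}}}^{k_u})| \leq \bigg( \sum_{N(\wp) \leq y} \textnormal{E}(Y_\wp^2) \bigg)^u = \textnormal{Var}(\mathcal{S}_y)^u.
\]
Since the number of eligible compositions and multinomial coefficients is bounded by a constant $C_r$ depending only on $r$, I would conclude
\[
|\textnormal{E}(T_y^r)| \leq C_r \max_{1 \leq u \leq r/2} \textnormal{Var}(\mathcal{S}_y)^u.
\]

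Finally, for $y$ large enough that $\textnormal{Var}(\mathcal{S}_y) \geq 1$ (which holds outside a bounded range of $y$, since $\textnormal{Var}(\mathcal{S}_y) = \log \log x + o((\log \log x)^{1/2}) \to \infty$), the maximum above is attained at $u = \lfloor r/2 \rfloor$ and is $\leq \textnormal{Var}(\mathcal{S}_y)^{r/2}$. Dividing by $\textnormal{Var}(\mathcal{S}_y)^{r/2}$ gives a normalized moment bounded by $C_r$ uniformly in $y$, while the remaining values of $y$ (with small variance) contribute a finite quantity since the normalized moment for each individual $y$ is finite. Taking the supremum delivers the lemma.

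The main obstacle I expect is the combinatorial bookkeeping in the expansion: correctly identifying which compositions survive after centering, verifying that their number is controlled by a constant depending only on $r$, and making sure the $u \leq r/2$ constraint exactly matches the normalization by $\textnormal{Var}(\mathcal{S}_y)^{r/2}$. Once the elementary observation $|\textnormal{E}(Y_\wp^k)| \leq \textnormal{E}(Y_\wp^2)$ for $k \geq 2$ (with $|Y_\wp|\leq 1$) is in hand, the estimate is essentially the same combinatorial computation that underlies the moment-method proof of the classical central limit theorem for bounded independent sums.
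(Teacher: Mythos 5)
Your proof is correct and follows the same path the paper outsources to Lemma 7 of \cite{liu}: after the centered multinomial expansion, $\textnormal{E}(Y_\wp)=0$ kills every composition with a singleton part, forcing $u \leq \lfloor r/2 \rfloor$, and the elementary bound $|\textnormal{E}(Y_\wp^k)| \leq \textnormal{E}(Y_\wp^2)$ for $k \geq 2$ (valid since $|Y_\wp| \leq 1$) reduces each inner sum to $\textnormal{Var}(\mathcal{S}_y)^u$, which the normalization by $\textnormal{Var}(\mathcal{S}_y)^{r/2}$ dominates uniformly once $\textnormal{Var}(\mathcal{S}_y)\geq 1$. Your treatment of the finitely many small-variance values of $y$ is also sound, so nothing is missing.
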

\begin{proof}
    We define $Y_\wp = X_\wp - \lambda_\wp$. We have
    $$\textnormal{E}((\mathcal{S}_y - E(\mathcal{S}_y))^r) = \sum_{u=1}^r \left( \sum{\vphantom{\sum}}' \frac{k!}{k_1! \cdots k_u!} \left( \sum{\vphantom{\sum}}'' \textnormal{E} (Y_{\wp_{\scaleto{1}{3pt}}}^{k_1} \cdots Y_{\wp_{\scaleto{u}{3pt}}}^{k_u})\right) \right),$$
with $\sum{\vphantom{\sum}}'$ and $\sum{\vphantom{\sum}}''$ defined as in last \lmaref{rthmoment}, except replacing $k$ by $r$. Then, following the exact steps in the proof of \cite[Lemma 7]{liu}, but using $\wp$ instead of $p$ and $\wp_i'$s instead of $p_i'$s, we complete the proof.
\end{proof}

Next, we recall the following results regarding sums over ideals necessary for our study. 
For $x > 1$, let $\pi_K(x)$ count the number of prime ideals $\wp \in \mathcal{P}$ with norm $N(\cdot)$ less than or equal to $x$. Landau in \cite{Landaupnt} generalized the classical prime number theorem to number fields by proving
\begin{equation*}
    \pi_K(x) = \text{Li}(x) + O \left( x \exp (-c_K \sqrt{\log x})\right),
\end{equation*}
for some computable constant $c_K$, where $\text{Li}(x) = \int_{2}^x dt/\log t$ is the offset logarithmic integral.
From this, we can deduce
\begin{equation}\label{pnt_num}
    \pi_K(x) = \frac{x}{\log x} + O \left( \frac{x}{(\log x)^2} \right).
\end{equation}
Recall that for any $s \in \mathbb{C}$ with $\Re(s) > 1$, the Dedekind zeta-function, $\zeta_K(s)$ is defined as
\begin{equation*}
    \zeta_K(s) = \sum_{\substack{\mathfrak{m}  \in \mathcal{M} \\ \mathfrak{m} \neq 0}} \frac{1}{N(\mathfrak{m} )^s}= \prod_{\wp \in \mathcal{P}} \left( 1 - \frac{1}{N(\wp)^s} \right)^{-1}.
\end{equation*}
Note that the above Euler product formula relates the sum representation of $\zeta_K$ to its product representation. Also, note that $\zeta_K(s)$ has a simple pole at $s=1$ with residue $\kappa$. Using the above definitions and results, we prove:
\begin{lma}\label{boundnm}
   Let $\alpha$ be a real number. We have
   \begin{enumerate}
       \item If $0 \leq \alpha < 1$, 
       $$\sum_{\substack{\wp \in \mathcal{P} \\ N(\wp) \leq x}} \frac{1}{N(\wp)^\alpha} \ll \frac{x^{1- \alpha}}{\log x}.$$
       \item If $0 \leq \alpha < 1$, 
       $$\sum_{\substack{\mathfrak{m} \in  \mathcal{M} \backslash \{ 0 \} \\ N(\mathfrak{m}) \leq x}} \frac{1}{N(\mathfrak{m})^\alpha} \ll x^{1- \alpha}.$$
       \item If $\alpha > 1$, then
       $$\sum_{\substack{\mathfrak{m} \in  \mathcal{M} \backslash \{ 0 \} \\ N(\mathfrak{m}) \leq x}} \frac{1}{N(\mathfrak{m})^\alpha} \ll 1,$$
       and thus
       $$\sum_{\substack{\wp \in \mathcal{P} \\ N(\wp) \leq x}} \frac{1}{N(\wp)^\alpha} \ll 1.$$
       \item As a generalization of Mertens' theorem, we have
       $$\sum_{\substack{\wp \in \mathcal{P} \\ N(\wp) \leq x}} \frac{1}{N(\wp)} = \log \log x + A + O \left( \frac{1}{\log x} \right),$$
       for some constant $A$.
       \item We have
       $$\sum_{\substack{\mathfrak{m} \in \mathcal{M} \backslash \{ 0 \} \\ N(\mathfrak{m}) \leq x}} \frac{1}{N(\mathfrak{m})} = \kappa \log x + B + O \left( \frac{1}{x^{\frac{2}{n_K+1} }} \right),$$
       for some constant $B$.
       \item If $\alpha > 1$, then
       $$\sum_{\substack{\wp \in \mathcal{P} \\ N(\wp) \geq x}} \frac{1}{N(\wp)^\alpha} = \frac{1}{(\alpha-1)x^{\alpha-1} (\log x)} + O \left( \frac{1}{x^{\alpha-1} (\log x)^2} \right).$$
       \item If $\alpha > 1$, then
       $$\sum_{\substack{\mathfrak{m} \in \mathcal{M} \\ N(\mathfrak{m}) \geq x}} \frac{1}{N(\mathfrak{m})^\alpha} = \frac{\kappa}{(\alpha-1)x^{\alpha-1}} + O \left( \frac{1}{x^{\alpha -1 + \frac{2}{n_{\scaleto{K}{2.5pt}} + 1}}} \right).$$
   \end{enumerate}
\end{lma}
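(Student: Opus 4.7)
All seven estimates will follow from two inputs already on the table: the prime ideal theorem \eqref{pnt_num} for the sums over $\wp \in \mathcal{P}$ (parts (1), (4), (6)), and Landau's ideal-counting asymptotic \eqref{Landaueq} for the sums over $\mathfrak{m} \in \mathcal{M}$ (parts (2), (3), (5), (7)). The unifying tool is Abel (partial) summation: rewrite $\sum_{N(\mathfrak{a}) \le x} f(N(\mathfrak{a}))$ as a Riemann--Stieltjes integral $A(x) f(x) - \int_2^x A(t) f'(t)\, dt$, taking $A = \pi_K$ or $A = I_K$ as appropriate, and substitute the known asymptotic for $A(t)$ to read off the leading term and an admissible error.

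For (1), taking $f(t) = t^{-\alpha}$ and $A = \pi_K$ gives a boundary contribution of order $\pi_K(x)/x^{\alpha} \ll x^{1-\alpha}/\log x$, and the resulting integral is of the same or smaller order when $0 \le \alpha < 1$. The identical argument with $A = I_K$ yields (2). Part (3) is immediate, since for $\alpha > 1$ the truncated series is bounded above by $\zeta_K(\alpha) < \infty$, and the prime-ideal sum is dominated by the ideal sum. Parts (5) and (7) come from the same machine applied to $\sum N(\mathfrak{m})^{-1}$ and to the tail $\sum_{N(\mathfrak{m}) \ge x} N(\mathfrak{m})^{-\alpha}$ respectively: integrating against the leading $\kappa t$ part of \eqref{Landaueq} produces the main terms $\kappa \log x$ and $\kappa/((\alpha-1) x^{\alpha-1})$, while the Landau error term of size $t^{1 - 2/(n_K+1)}$ contributes precisely the stated error. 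Part (6) is the analogous tail estimate for primes, handled the same way with \eqref{pnt_num}.

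The one part requiring additional care is (4), the Mertens-type identity. A naive partial summation with \eqref{pnt_num} only yields $\sum_{N(\wp) \le x} 1/N(\wp) = \log \log x + O(1)$, so the content of (4) is that the $O(1)$ is of the form $A + O(1/\log x)$ for a well-defined constant $A$. The standard route is to first establish the intermediate estimate $\sum_{N(\wp) \le x} (\log N(\wp))/N(\wp) = \log x + O(1)$ by partial summation from the PNT, then apply partial summation once more with the weight $1/\log t$; the constant $A$ emerges as the limiting value at infinity of a convergent integral involving $\pi_K(t) \log t - t$, and the tail of that integral supplies the $O(1/\log x)$ error.

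I expect (4) to be the main obstacle, since it is the only part where the constant term must be genuinely identified rather than merely bounded; the other six parts are mechanical applications of Abel summation with \eqref{pnt_num} and \eqref{Landaueq}. Once the convergence of the auxiliary integral in (4) is verified via the PNT, the entire lemma is assembled from these building blocks.
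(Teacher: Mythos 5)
Your proof plan is correct and uses the same underlying machine as the paper: Abel summation against $\pi_K$ via \eqref{pnt_num} for the prime-ideal sums and against $I_K$ via \eqref{Landaueq} for the ideal sums. The only cosmetic difference is that the paper, rather than carrying out the partial summations explicitly, delegates Parts 1, 3, 4 to the Tur\'an-type estimates of \cite{liuturan} (whose conditions (A) and (B) are exactly \eqref{pnt_num} and \eqref{Landaueq}), Part 5 to the Axiom~A framework of \cite{jk2}, and Parts 2, 6, 7 to routine partial summation as in \cite{aler}; your version simply inlines those references. One small point worth noting on Part (4): the error term $O(1/\log x)$ cannot be squeezed out of the weak form \eqref{pnt_num} alone, and your argument implicitly relies on the stronger Landau PNT $\pi_K(x) = \mathrm{Li}(x) + O(x\exp(-c_K\sqrt{\log x}))$ stated just before \eqref{pnt_num} to make the auxiliary integral converge with a small enough tail, so it would be cleaner to say so explicitly; with that caveat the plan goes through.
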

\begin{proof}
    Notice that \eqref{Landaueq} and \eqref{pnt_num} satisfy the conditions (A) and (B) in \cite[Page 574]{liuturan}, and thus Parts 1, 3, and 4 follow from \cite[Lemma 1 and 2]{liuturan}. Also, recall that \eqref{Landaueq} satisfies Axiom A of \cite[Chapter 4]{jk2}. Thus, by \cite[Proposition 2.8(i)]{jk2}, Part 5 follows. Parts 2, 6, and 7 follow from the technique of partial summation (see \cite[Lemma 1.2]{aler}).
\end{proof}

\section{The Erd\H{o}s-Kac theorem over subsets in number fields}
In this section, we prove the Erd\H{o}s-Kac theorem over any subset of the set of ideals $\mathcal{M}$ satisfying the set of conditions mentioned in \thmref{yurugen}.
\begin{proof}[\textbf{Proof of \thmref{yurugen}}]
Given $\mathcal{P}, \mathcal{M}$ and $\mathcal{S}$ as in the statement of the theorem, we assume that for all $x > 1$, there exists a constant $\beta$ with $0 < \beta \leq 1$ and $y = y(x) < x^\beta$ such that the conditions \eqref{Scondition} and (a) to (f) satisfy. For $\mathfrak{m}  \in \mathcal{S}$, we want to show the quantity 
$$\frac{\omega(\mathfrak{m} ) - \log \log N(\mathfrak{m} )}{\sqrt{\log \log N(\mathfrak{m} )}}$$
satisfies the normal distribution. By the equivalent statements in \lmaref{linkN(m)x},  \lmaref{linkomegay}, and \lmaref{linkomegayESy}, to prove \thmref{yurugen}, it suffices to prove
$$\lim_{x \rightarrow \infty} P_{\mathcal{S},x} \bigg\{ \mathfrak{m}  \ : \ \frac{\omega_y(\mathfrak{m}) - \textnormal{E}(\mathcal{S}_y)}{\sqrt{\textnormal{Var}(\mathcal{S}_y)}} \leq \gamma \bigg\} = \Phi(\gamma).$$
The distribution function $F_y$ respect to $P_{\mathcal{S},x}$ is defined by
$$F_y(\gamma) := P_{\mathcal{S},x} \bigg\{ \mathfrak{m}   \ : \ \frac{\omega_y(\mathfrak{m} ) - \textnormal{E}(\mathcal{S}_y)}{\sqrt{\textnormal{Var}(\mathcal{S}_y)}} \leq \gamma \bigg\}.$$
Notice that the $r$-th moment of $F_y$ can be written as
\begin{align*}
    & \int_{-\infty}^\infty t^r dF_y(t) \\
    & = \sum_{t = -\infty}^\infty \bigg\{ \lim_{u \rightarrow \infty} \sum_{i=1}^u (t+i/u)^r \bigg( F_y(t+i/u) - F_y(t+(i-1)/u) \bigg) \bigg\} \\
    & = \sum_{t = -\infty}^\infty \bigg\{ \lim_{u \rightarrow \infty} \sum_{i=1}^u (t+i/u)^r P_{\mathcal{S},x} \bigg\{ \mathfrak{m}  \ : \ (t+(i-1)/u) < \frac{\omega_y(\mathfrak{m}) - \textnormal{E}(\mathcal{S}_y)}{\sqrt{\textnormal{Var}(\mathcal{S}_y)}} \leq (t + i/u) \bigg\} \bigg\}. 
\end{align*}
Thus, by the definition of $P_{\mathcal{S},x}$, we have
$$\int_{-\infty}^\infty t^r dF_y(t) = \frac{1}{|\mathcal{S}(x)|} \sum_{\mathfrak{m}  \in \mathcal{S}(x)} \left( \frac{\omega_y(\mathfrak{m} ) - \textnormal{E}(\mathcal{S}_y)}{\sqrt{\textnormal{Var}(\mathcal{S}_y)}} \right)^r = \textnormal{E}_{\mathcal{S},x} \bigg\{ \bigg( \frac{\omega_y(\mathfrak{m} ) - \textnormal{E}(\mathcal{S}_y)}{\sqrt{\textnormal{Var}(\mathcal{S}_y)}} \bigg)^r \bigg\}.$$
Hence, to prove
$$\lim_{x \rightarrow \infty} F_y(\gamma) = \Phi(\gamma),$$
by Fact 3, it suffices to show that for all $r \in \mathbb{N}$,
$$\lim_{x \rightarrow \infty} \textnormal{E}_{\mathcal{S},x} \bigg\{ \bigg( \frac{\omega_y(\mathfrak{m}) - \textnormal{E}(\mathcal{S}_y)}{\sqrt{\textnormal{Var}(\mathcal{S}_y)}} \bigg)^r \bigg\} = \mu_r.$$
By \lmaref{rthmoment}, we observe that the last inequality holds if
$$\lim_{x \rightarrow \infty} \textnormal{E} \bigg( \bigg( \frac{\mathcal{S}_y(\mathfrak{m} ) - \textnormal{E}(\mathcal{S}_y)}{\sqrt{\textnormal{Var}(\mathcal{S}_y)}} \bigg)^r \bigg) = \mu_r.$$
We define a new random variable $\Phi$ by
$$\Phi_y := \frac{\mathcal{S}_y - \textnormal{E}(\mathcal{S}_y)}{\sqrt{\textnormal{Var}(\mathcal{S}_y)}}.$$
Note that \lmaref{finite_r-thmoment} ensures that any sequence of $\Phi_y$'s satisfies the hypothesis of Fact 5. Thus, by the Central Limit theorem given in Fact 5, we have
$$\lim_{x \rightarrow \infty} P(\Phi_{y(x)} \leq \gamma) = \Phi(\gamma), \quad \text{for all } \gamma \in \mathbb{R}.$$
Also, \lmaref{finite_r-thmoment} implies that for each $r \in \mathbb{N}$, there exists $\delta = \delta(r) > 0$ such that
$$\sup_y \left\{ \int_{-\infty}^\infty |t|^{r+\delta} d\Phi_y(t) \right\} < \infty.$$
Combining the last two observations with Fact 4, we obtain
$$\lim_{x \rightarrow \infty} \textnormal{E} \bigg( \bigg( \frac{\mathcal{S}_y(\mathfrak{m}) - \textnormal{E}(\mathcal{S}_y)}{\sqrt{\textnormal{Var}(\mathcal{S}_y)}} \bigg)^r \bigg) = \mu_r,$$
and thus establish
$$\lim_{x \rightarrow \infty} F_y(\gamma) = \Phi(\gamma).$$
This completes the proof of \thmref{yurugen}, i.e., we obtain that for any $\gamma \in \mathbb{R}$, we have
$$\lim_{x \rightarrow \infty} \frac{1}{|\mathcal{S}(x)|} \bigg| \left\{ \mathfrak{m}  \in \mathcal{S}(x) \ : \ N(\mathfrak{m}) \geq 3, \  \frac{\omega(\mathfrak{m}) - \log \log N(\mathfrak{m})}{\sqrt{\log \log N(\mathfrak{m})}} \leq \gamma\right\} \bigg| = \Phi(\gamma).$$
\end{proof}
\section{The Erd\H{o}s-Kac theorem over \texorpdfstring{$h$}{}-free ideals}
To prove the Erd\H{o}s-Kac theorem over $h$-free ideals given in \thmref{erdoskachfreeideal}, we begin by estimating the distribution of $h$-free ideals which are co-prime to a given prime ideal. To do this, we introduce the generalized M\"obius $\mu$-function. For an $\mathfrak{m} \in \mathcal{M}$, the Mobius function $\mu_K(\mathfrak{m})$ is defined as
$$\mu_K(\mathfrak{m}) = \begin{cases}
    1 & \text{ if } \mathfrak{m} = 1, \\
    (-1)^r & \text{ if } \mathfrak{m} \text{ is a product of $r$ distinct prime ideals},\\
    0 & \text{ if } \mathfrak{m} \text{ is divided by square of a prime ideal}.
\end{cases}
$$
Note that, for any $\mathfrak{m}  \in \mathcal{M}$, $\mu_K(\mathfrak{m})$ satisfies the identity
\begin{equation}\label{iden_mu}
    \sum_{\substack{\mathfrak{d} \in \mathcal{M} \\ \mathfrak{d}^h|\mathfrak{m}}} \mu_K(\mathfrak{d}) = \begin{cases}
        1 & \text{ if $\mathfrak{m}$ is $h$-free}, \\
        0 & \text{ otherwise}.
    \end{cases}
\end{equation}
Moreover, the generating series for $\mu_K(\mathfrak{m})$ is the reciprocal of the Dedekind $\zeta$-function and is given by (for $\Re(s) > 1$)
\begin{equation}\label{dedekind}
    \frac{1}{\zeta_K(s)} = \sum_{\substack{\mathfrak{m} \in \mathcal{M} \\ \mathfrak{m} \neq 0}} \frac{\mu_K(\mathfrak{m})}{N(\mathfrak{m})^s} = \prod_{\wp \in \mathcal{P}} \left( 1 - \frac{1}{N(\wp)^s} \right).
\end{equation}
Using the above definitions, we prove:
\begin{lma}\label{hfreeidealrestrict}
    Let $h \geq 2$ be an integer. Let $\ell$ be a fixed prime ideal and $\mathcal{S}_{h,\ell}(x)$ denote the set of $h$-free ideals with norm $N(\cdot)$ less than or equal to $x$ and coprime to $\ell$. Let $R_{\mathcal{S}_h}(x)$ be defined in \eqref{RSh(x)}. Then, we have
    $$|\mathcal{S}_{h,\ell}(x)| = \left( \frac{N(\ell)^h - N(\ell)^{h-1}}{N(\ell)^h - 1} \right) \frac{\kappa}{\zeta_K(h)} x + O_h \big( R_{\mathcal{S}_h}(x) \big).$$
\end{lma}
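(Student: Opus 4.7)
The plan is to detect the $h$-free condition via the Möbius identity \eqref{iden_mu}. Writing
$$|\mathcal{S}_{h,\ell}(x)| = \sum_{\substack{\mathfrak{m} \in \mathcal{M},\ N(\mathfrak{m}) \leq x \\ (\mathfrak{m},\ell)=1}} \sum_{\mathfrak{d}^h \mid \mathfrak{m}} \mu_K(\mathfrak{d}),$$
I would swap the order of summation. Only $\mathfrak{d}$ with $N(\mathfrak{d}) \leq x^{1/h}$ and $(\mathfrak{d},\ell) = 1$ contribute, the latter because a squarefree $\mathfrak{d}$ divisible by $\ell$ would force $\ell \mid \mathfrak{m}$. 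Substituting $\mathfrak{m} = \mathfrak{d}^h \mathfrak{n}$ reduces the inner sum to counting ideals $\mathfrak{n}$ coprime to $\ell$ with $N(\mathfrak{n}) \leq x/N(\mathfrak{d})^h$. By inclusion--exclusion and Landau's estimate \eqref{Landaueq}, for any $y \geq 1$,
$$\bigl|\{\mathfrak{n} \in \mathcal{M} : N(\mathfrak{n}) \leq y,\ (\mathfrak{n},\ell)=1\}\bigr| = I_K(y) - I_K(y/N(\ell)) = \kappa\left(1 - \frac{1}{N(\ell)}\right) y + O(y^{\eta}),$$
where $\eta := 1 - 2/(n_K+1)$.

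Plugging this into the outer sum, the main term becomes
$$\kappa \left(1 - \frac{1}{N(\ell)}\right) x \sum_{\substack{(\mathfrak{d},\ell)=1 \\ N(\mathfrak{d}) \leq x^{1/h}}} \frac{\mu_K(\mathfrak{d})}{N(\mathfrak{d})^h}.$$
I would then complete the $\mathfrak{d}$-sum to infinity. Using the Euler product \eqref{dedekind} and isolating the local factor at $\ell$,
$$\sum_{(\mathfrak{d},\ell)=1} \frac{\mu_K(\mathfrak{d})}{N(\mathfrak{d})^h} = \prod_{\wp \neq \ell}\left(1 - \frac{1}{N(\wp)^h}\right) = \frac{1}{\zeta_K(h)} \cdot \frac{N(\ell)^h}{N(\ell)^h - 1}.$$
Combined with $(1 - 1/N(\ell))\,N(\ell)^h = N(\ell)^h - N(\ell)^{h-1}$, this delivers the claimed leading constant $\bigl(N(\ell)^h - N(\ell)^{h-1}\bigr)\bigl(N(\ell)^h - 1\bigr)^{-1} \kappa/\zeta_K(h)$.

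Finally, I would bound the two error contributions using \lmaref{boundnm}. The tail from completing the Möbius series is $\ll x \sum_{N(\mathfrak{d}) > x^{1/h}} N(\mathfrak{d})^{-h} \ll x^{1/h}$ by Part 7 (applied with $\alpha = h \geq 2$). The error inherited from Landau's $O(y^\eta)$ term contributes
$$\ll x^{\eta} \sum_{N(\mathfrak{d}) \leq x^{1/h}} \frac{1}{N(\mathfrak{d})^{h\eta}},$$
which I would estimate in three regimes: if $h\eta > 1$, Part 3 gives a bounded inner sum, yielding $O(x^\eta)$; if $h\eta = 1$, Part 5 gives $O(\log x)$, yielding $O(x^{1/h} \log x)$; if $h\eta < 1$, Part 2 gives $\ll x^{1/h - \eta}$, yielding $O(x^{1/h})$. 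These three regimes correspond exactly to $1/h < \eta$, $1/h = \eta$, and $1/h > \eta$, and in each the tail $x^{1/h}$ is absorbed, matching $R_{\mathcal{S}_h}(x)$ in \eqref{RSh(x)}. The main obstacle is just this careful case split; conceptually the argument is the coprime-to-$\ell$ refinement of \eqref{hfreeidealcount}, introducing a local Euler-factor correction at $\ell$.
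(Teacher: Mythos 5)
Your proposal is correct and follows essentially the same route as the paper: Möbius detection of the $h$-free condition, restricting to $\mathfrak{d}$ coprime to $\ell$ (which the paper formalizes via a modified Möbius function $\mu_{K,\ell}$), counting ideals coprime to $\ell$ by $I_K(y)-I_K(y/N(\ell))$ with Landau's estimate, completing the Dirichlet series via the Euler product to get the local factor at $\ell$, and the same three-case error analysis using Parts 2, 3, 5, and 7 of Lemma~\ref{boundnm}. No substantive differences.
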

\begin{proof}
    We define the modified M\"obius $\mu$-function on $\mathcal{M}$ as
    $$\mu_{K,\ell}(\mathfrak{m}) = \begin{cases}
        \mu_K(\mathfrak{m}) & \text{ if } (\ell,\mathfrak{m}) =1, \\
        0 & \text{ otherwise}.
    \end{cases}$$ 
    Using this and \eqref{iden_mu}, notice that
    \begin{align*}
        |\mathcal{S}_{h,\ell}(x)| & = \sum_{\substack{\mathfrak{m} \in \mathcal{M} \\ N(\mathfrak{m}) \leq x, (\mathfrak{m},\ell) = 1}} \sum_{\substack{\mathfrak{d} \in\mathcal{M} \\ \mathfrak{d}^h|m}} \mu_K(\mathfrak{d}) \\
        & = \sum_{\substack{\mathfrak{m} \in \mathcal{M} \\ N(\mathfrak{m}) \leq x, (\mathfrak{m},l) = 1}} \sum_{\substack{\mathfrak{d} \in\mathcal{M} \\ \mathfrak{d}^h|m}} \mu_{K,\ell}(\mathfrak{d}) \\
        & = \sum_{\substack{\mathfrak{d} \in\mathcal{M} \\ N(\mathfrak{d})^h \leq x}} \mu_{K,\ell}(\mathfrak{d}) \sum_{\substack{\mathfrak{m} \in \mathcal{M} \\ N(\mathfrak{m}) \leq x/N(\mathfrak{d})^h, (\mathfrak{m},l) = 1}} 1 \\
        & = \sum_{\substack{\mathfrak{d} \in\mathcal{M} \\ N(\mathfrak{d})^h \leq x}} \mu_{K,\ell}(\mathfrak{d}) \left( I_K(x/N(\mathfrak{d})^h)  - I_K(x/(N(\mathfrak{d})^h N(\ell))) \right).
    \end{align*}
    Using \eqref{Landaueq} and $N(\ell) > 1$, we obtain
    \begin{align*}
        |\mathcal{S}_{h,\ell}(x)| & = \kappa x \left( 1 - \frac{1}{N(\ell)} \right) \sum_{\substack{\mathfrak{d} \in\mathcal{M} \\ N(\mathfrak{d})^h \leq x}} \frac{\mu_{K,\ell}(\mathfrak{d})}{N(\mathfrak{d})^h} + O \left( \sum_{\substack{\mathfrak{d} \in\mathcal{M} \\ N(\mathfrak{d})^h \leq x}} \frac{x^{1 - \frac{2}{n_{\scaleto{K}{2.5pt}}+1}} }{N(\mathfrak{d})^{h(1 - \frac{2}{n_{\scaleto{K}{2.5pt}} + 1})}} \right) \\
        & = \kappa x \left( 1 - \frac{1}{N(\ell)} \right) \sum_{\substack{\mathfrak{d} \in\mathcal{M} \\ \mathfrak{d} \neq 0}} \frac{\mu_{K,l}(\mathfrak{d})}{N(\mathfrak{d})^h} + O \left( x \sum_{\substack{\mathfrak{d} \in\mathcal{M} \\ N(\mathfrak{d}) > x^{1/h}}} \frac{1}{N(\mathfrak{d})^h} \right) \\
        & \hspace{.5cm} + O \left( \sum_{\substack{\mathfrak{d} \in\mathcal{M} \\ N(\mathfrak{d}) \leq x^{1/h}}} \frac{x^{1 - \frac{2}{n_{\scaleto{K}{2.5pt}}+1}} }{N(\mathfrak{d})^{h(1 - \frac{2}{n_{\scaleto{K}{2.5pt}} + 1})}} \right).
    \end{align*}
Note that, using the Euler product formula mentioned in \eqref{dedekind}, we write the first sum on the right side above as
$$\sum_{\substack{\mathfrak{d} \in\mathcal{M} \\ \mathfrak{d} \neq 0}} \frac{\mu_{K,\ell}(\mathfrak{d})}{N(\mathfrak{d})^h} =  \sum_{\substack{\mathfrak{d} \in\mathcal{M} \\ \mathfrak{d} \neq 0, (\mathfrak{d},\ell) = 1}} \frac{\mu_K(\mathfrak{d})}{N(\mathfrak{d})^h} = \prod_{\substack{\wp \in \mathcal{P} \\ \wp \neq \ell}} \left( 1 - \frac{1}{N(\wp)^h} \right) = \frac{1}{\zeta_K(h)} \left( \frac{N(\ell)^h}{N(\ell)^h - 1} \right).$$
We bound the second sum using Part 7 of \lmaref{boundnm} with $\alpha = h$ as
\begin{equation*}
    \sum_{\substack{\mathfrak{d} \in\mathcal{M} \\ N(\mathfrak{d}) > x^{1/h}}} \frac{1}{N(\mathfrak{d})^h} \ll_h \frac{1}{x^{1 - \frac{1}{h}}}.
\end{equation*}
Also, for the third sum, using Parts 2, 3, and 5 of \lmaref{boundnm}, we obtain
\begin{equation*}
    \sum_{\substack{\mathfrak{d} \in\mathcal{M} \\ N(\mathfrak{d}) \leq x^{1/h}}} \frac{1}{N(\mathfrak{d})^{h(1 - \frac{2}{n_{\scaleto{K}{2.5pt}} + 1})}} \ll_h \begin{cases}
    1 & \text{ if } \frac{1}{h} < 1 - \frac{2}{n_{\scaleto{K}{3pt}} + 1}, \\
    \log x & \text{ if } \frac{1}{h} = 1 - \frac{2}{n_{\scaleto{K}{3pt}} + 1}, \text{ and} \\
    x^{\frac{1}{h} - 1 + \frac{2}{n_{\scaleto{K}{2.5pt}} + 1}} & \text{ if } \frac{1}{h} > 1 - \frac{2}{n_{\scaleto{K}{3pt}} + 1}.
\end{cases}
\end{equation*}
Combining the above four results, we complete the proof.
\end{proof}
Using the results discussed in this section, we prove the following Erd\H{o}s-Kac theorem over $h$-free ideals in the number field $K$. We achieve this by applying \thmref{yurugen} with $\mathcal{S} = \mathcal{S}_h$, $\beta = 1$, and $y = x^{1/\log \log x}$. We prove:
\begin{proof}[\textbf{Proof of \thmref{erdoskachfreeideal}}]
Consider the set $\mathcal{S} = \mathcal{S}_h$. By \eqref{hfreeidealcount}, we have
$$|\mathcal{S}(x)| = \frac{\kappa}{\zeta_K(h)} x + O_h \big( R_{\mathcal{S}_h}(x) \big),$$
where $R_{\mathcal{S}_h}(x)$ is defined in \eqref{RSh(x)}. Thus, 
$$\frac{|\mathcal{S}(x^{1/2})|}{|\mathcal{S}(x)|} \ll \frac{1}{x^{1/2}},$$
and hence the condition \eqref{Scondition}, $|\mathcal{S}(x^{1/2})| = o \left( |\mathcal{S}(x)| \right)$ is satisfied. For a fixed prime ideal $\wp$, let 
$$\mathcal{S}_\wp(x) := \left\{ \mathfrak{m} \in \mathcal{S}_h (x) \ : \ \wp | \mathfrak{m} \right\}.$$
Using \lmaref{hfreeidealrestrict} and \rmkref{remark1}, for large $x$, we obtain
\begin{align}\label{spx}
    |\mathcal{S}_\wp(x)| 
    & = \sum_{k=1}^{h-1} |\mathcal{S}_{h,\wp} (x/N(\wp)^k)| \notag \\
    & = \sum_{k=1}^{h-1} \left( \left( \frac{N(\wp)^h - N(\wp)^{h-1})}{N(\wp)^h - 1} \right) \frac{1}{N(\wp)^k} \frac{\kappa}{\zeta_K(h)} x + O_h \left( R_{\mathcal{S}_h}(x/N(\wp)^k) \right) \right) \notag \\
    & = \frac{N(\wp)^{h-1} - 1}{N(\wp)^h - 1} \frac{\kappa}{\zeta_K(h)} x + O_h \left( \left( \frac{x}{N(\wp)} \right)^\tau \right),
\end{align}
where $\tau < 1$. Thus, we have
\begin{align*}
    \frac{|\mathcal{S}_\wp(x)|}{|\mathcal{S}(x)|} 
    & = \lambda_\wp + e_\wp(x),
\end{align*}
where $\lambda_\wp = \frac{N(\wp)^{h-1} - 1}{N(\wp)^h - 1}$ and $e_\wp(x) = O_h \left( 1/ (x^{1 - \tau} N(\wp)^{\tau}) \right)$. 

Next, we choose $y = x^{1/\log \log x}$ and $\beta = 1$, and check that all the conditions in \thmref{yurugen} hold true. Note that the set in Condition (a) is empty and thus the condition holds trivially. By Part 4 of \lmaref{boundnm}, we obtain 
\begin{align*}
    \sum_{x^\frac{1}{\log \log x} < N(\wp) \leq x} \lambda_\wp \ll \sum_{x^\frac{1}{\log \log x} < N(\wp) \leq x} \frac{1}{N(\wp)} \ll \log \log \log x.
\end{align*}
Since $\log \log \log x = o \left( (\log \log x)^{1/2} \right)$, Condition (b) is satisfied as well. Using Part 1 of \lmaref{boundnm}, we have
$$\sum_{x^\frac{1}{\log \log x} < N(\wp) \leq x} |e_\wp(x)|  \ll_h \frac{1}{x^{1 - \tau}} \sum_{N(\wp) \leq x} \frac{1}{N(\wp)^{\tau}} \ll_h \frac{1}{\log x},$$
which makes Condition (c) true. Using 
$$ \frac{N(\wp)^{h-1} - 1}{N(\wp)^h - 1} = \frac{1}{N(\wp)} - \frac{N(\wp)-1}{N(\wp)(N(\wp)^h -1)}$$
with Part 3 and Part 4 of \lmaref{boundnm}, we obtain
\begin{align*}
\sum_{N(\wp) \leq x^\frac{1}{\log \log x}} \lambda_\wp & = \sum_{N(\wp) \leq x^\frac{1}{\log \log x}} \frac{1}{N(\wp)} + O(1) \\
& = \log \log x + O (\log \log \log x),
\end{align*}
which makes Condition (d) true. Finally, using Part 3 of \lmaref{boundnm} again, we have
$$\sum_{N(\wp) \leq x^\frac{1}{\log \log x}} \lambda_\wp^2 \ll \sum_{N(\wp) \leq x^\frac{1}{\log \log x}} \frac{1}{N(\wp)^2} \ll O(1).$$
This makes Condition (e) true. Thus, we are only required to verify Condition (f). Using \eqref{spx} and the Chinese Remainder Theorem, we obtain, for distinct prime ideals $\wp_{\scaleto{1}{3pt}},\cdots,\wp_{\scaleto{u}{3pt}}$, 
\begin{align*}
    & \left| \left\{ \mathfrak{m} \in \mathcal{S}(x) \ : \ \wp_i | \mathfrak{m} \text{ for all } i \in \{ 1, 2 ,\cdots, u\} \right\} \right| \\
    & = \left( \prod_{i=1}^u \frac{N(\wp_i)^{h-1} - 1}{N(\wp_i)^h - 1} \right) \frac{\kappa}{\zeta_K(h)} x + O_h \left( \frac{x^\tau}{\prod_{i=1}^u N(\wp_i)^\tau} \right).
\end{align*}
Thus
$$\frac{\left| \left\{ \mathfrak{m} \in \mathcal{S}(x) \ : \ \wp_i | \mathfrak{m} \text{ for all } i \in \{ 1, 2 ,\cdots, u\} \right\} \right|}{|\mathcal{S}(x)|} = \prod_{i=1}^u \frac{N(\wp_i)^{h-1} - 1}{N(\wp_i)^h - 1} + e_{\wp_{\scaleto{1}{3pt}} \cdots \wp_{\scaleto{u}{3pt}}}(x),$$
where
$$|e_{\wp_{\scaleto{1}{3pt}} \cdots \wp_{\scaleto{u}{3pt}}}(x)| \ll_h \frac{1}{x^{1 - \tau}}  \frac{1}{\prod_{i=1}^u N(\wp_i)^{\tau}}.$$
Let $r \in \mathbb{N}$. By the definition of $\sum{\vphantom{\sum}}''$ in the conditions mentioned before \thmref{yurugen}, and using Part 1 of \lmaref{boundnm} and $x^{1/\log \log x} = o(x^\epsilon)$ for any small $\epsilon > 0$, we have
\begin{align*}
    \sum{\vphantom{\sum}}'' |e_{\wp_{\scaleto{1}{3pt}} \cdots \wp_{\scaleto{u}{3pt}}}(x)| & \ll_h \frac{1}{x^{1 - \tau}}  \left( \sum_{N(\wp) \leq x^\frac{1}{\log \log x} } \frac{1}{N(\wp)^{\tau}} \right)^u \\
    & \ll_h \frac{1}{x^{1 - \tau}} \left( \frac{x^{\frac{1}{\log \log x} \left( 1 - \tau \right)}}{\log x} \right)^u \\
    & \ll_h \frac{1}{x^{1 - \tau - \epsilon'}},
\end{align*}
for any small $\epsilon' > 0$. Since, $ x^{- \left(1 - \tau - \epsilon'\right)} = o \left( (\log \log x)^{-r/2} \right)$, thus Condition (f) holds true as well. Since all the conditions of \thmref{yurugen} hold true with $\mathcal{S} = \mathcal{S}_h$, $\beta=1$ and $y = x^{1/\log \log x}$, thus applying \thmref{yurugen} completes the proof.
\end{proof}
\section{The Erd\H{o}s-Kac theorem over \texorpdfstring{$h$}{}-full ideals}
Let $h \geq 2$ be an integer. Recall that $\mathcal{N}_h$ denotes the set of $h$-full ideals in the number field $K$. Let $\mathfrak{m} \in \mathcal{N}_h$. We can separate the prime ideals in the prime ideal factorization of $\mathfrak{m}$ (see \eqref{factorization}) based on their powers modulo $h$ as 
\begin{equation}\label{hfullfactorize}
\mathfrak{m} = \underbrace{\wp_{1,0}^{h t_{1,0}} \cdots \wp_{l_0,0}^{h t_{l_0,0}}}_{\exp \equiv 0\pmod{h}} \underbrace{\wp_{1,1}^{h t_{1,1} + 1} \cdots \wp_{l_1,1}^{h t_{l_1,1}+1}}_{\exp \equiv 1\pmod{h}} \cdots \underbrace{\wp_{1,h-1}^{h t_{1,h-1} + h-1} \cdots \wp_{l_{h-1},h-1}^{h t_{l_{h-1},h-1} + h-1}}_{\exp \equiv h-1 \pmod{h}},
\end{equation}
where $\wp_{i,j}$ with $1 \leq i \leq l_j$ denote a distinct prime with its power congruent to $j$ modulo $h$ and $l_j$ denote the number of such distinct primes. 

We set
$$\mathfrak{a}_0 = \wp_{1,0}^{t_{1,0}} \cdots \wp_{l_0,0}^{t_{l_0,0}} \wp_{1,1}^{t_{1,1} - 1} \cdots \wp_{l_1,1}^{t_{l_1,1} - 1} \cdots \wp_{1,h-1}^{t_{1,h-1} - 1} \cdots \wp_{l_{h-1},h-1}^{ t_{l_{h-1},h-1} - 1}$$
and for $j = 1, \ldots, h-1$, we set
$$\mathfrak{a}_j = \wp_{1,j} \cdots \wp_{l_j,j}.$$
Note that $\mathfrak{a}_1,\cdots, \mathfrak{a}_{h-1}$ are square-free and co-prime to each other. 

Thus, we can write
\begin{equation}\label{defnhfull}
\mathfrak{m} = \mathfrak{a}_0^h \mathfrak{a}_1^{h+1} \cdots \mathfrak{a}_{h-1}^{2h -1}.
\end{equation}
Notice that the generating series for the $h$-full ideals is defined on $\Re(s) > 1/h$ as:
\begin{align}\label{formulanhk}
    \mathfrak{N}_h(s) & := \sum_{\mathfrak{m} \in \mathcal{N}_h} \frac{1}{N(\mathfrak{m})^s} \notag \\
    & = \prod_\wp \left( 1 + \frac{1}{N(\wp)^{hs}} + \cdots + \frac{1}{N(\wp)^{ks}} + \cdots \right) \notag \\
    & = \prod_\wp \left( 1 +  \frac{N(\wp)^{-hs}}{1 - N(\wp)^{-s}} \right).
\end{align}
Let $\mathcal{N}_h(x)$ denote the number of $h$-full ideals with a norm not exceeding $x$. Then, by the definition of an $h$-full ideal \eqref{defnhfull}, we have
$$\mathcal{N}_h(x) = \sum_{\substack{\mathfrak{m} \in \mathcal{N}_h \\ N(\mathfrak{m}) \leq x}} 1 = \sum_{\substack{\mathfrak{a}_0,\cdots, \mathfrak{a}_{h-1} \in \mathcal{M} \\ N(\mathfrak{a}_0^h \mathfrak{a}_1^{h+1} \cdots \mathfrak{a}_{h-1}^{2h -1}) \leq x}} \mu_K^2(\mathfrak{a}_1 \cdots \mathfrak{a}_{h-1}),$$
where the inner sum is non-zero if and only if $\mathfrak{a}_1,\cdots, \mathfrak{a}_{h-1}$ are square-free and co-prime to each other. By \cite[(1.5)]{is}, we have that there exists constants $\alpha_{r,h}$ where $2h+2 < r \leq (3h^2+h-2)/2$ such that the following equality is satisfied
\begin{equation}\label{iden2}
    \left( 1 + \frac{v^h}{1-v} \right) (1-v^h) (1 -v^{h+1}) \cdots (1-v^{2h-1}) = 1 - v^{2h+2} + \sum_{r=2h+3}^{(3h^2+h-2)/2} \alpha_{r,h} v^r.
\end{equation}
Note that $|\alpha_{r,h}| \leq h 2^h$. Now, substituting $v= N(\wp)^{-s}$ above, taking the product over all prime ideals, and using the product formula for the Dedekind $\zeta$-function \eqref{dedekind}, we obtain
\begin{equation}\label{Nhxformula}
\mathfrak{N}_h(s) = \zeta_K(hs) \zeta_K((h+1)s) \cdots \zeta_K((2h-1)s) \zeta_K^{-1}((2h+2)s) \phi_h^K(s),
\end{equation}
where $\phi_h^K(s)$ satisfies
\begin{equation}\label{iden1}
\prod_{\wp} \left( 1 - N(\wp)^{-(2h+2)s} + \sum_{r = 2h+3}^{(3h^2+h-2)/2} \alpha_{r,h} N(\wp)^{-rs} \right) = \zeta_K^{-1}((2h+2)s) \phi_h^K(s).
\end{equation}
For $h =2$, the sum on the right side of \eqref{iden2} is empty, and thus, $\phi_2^K(s) = 1$. Moreover, by \eqref{iden1}, if $h >2$, $\phi_h^K(s)$ has a Dirichlet series with abscissa of absolute convergence equal to $1/(2h+3)$. Thus, we may write
\begin{equation}\label{nhks}
    \mathfrak{N}_h(s) = \mathcal{G}_h(s) \mathcal{L}_h(s),
\end{equation}
where
\begin{equation}\label{L_h(s)}
\mathcal{L}_h(s) = \sum_{n=1}^\infty l_h(n) n^{-s} = \zeta_K(hs) \zeta_K((h+1)s) \cdots \zeta_K((2h-1)s),
\end{equation}
and
\begin{equation}\label{G_h(s)}
\mathcal{G}_h(s) = \sum_{n=1}^\infty g_h(n) n^{-s} = \frac{\phi_h^K(s)}{\zeta_K((2h+2)s)},
\end{equation}
is a Dirichlet series converging absolutely in $\Re(s) > 1/(2h+2)$. Thus, one can write
\begin{equation}\label{nhkx1}
    \mathcal{N}_h(x) = \sum_{\substack{\mathfrak{a}_0,\cdots, \mathfrak{a}_{h-1} \in \mathcal{M} \\ N(\mathfrak{a}_0^h \mathfrak{a}_1^{h+1} \cdots \mathfrak{a}_{h-1}^{2h -1}) \leq x}} \mu_K^2(\mathfrak{a}_1 \cdots \mathfrak{a}_{h-1}) = \sum_{m n \leq x} g_h(m) l_h(n).
\end{equation}
Additionally, let $\mathcal{T}_h(x)$ denote the unweighted sum
\begin{equation}\label{T_h(x)}
\mathcal{T}_h(x) := \sum_{n \leq x} l_h(n) = \sum_{N(\mathfrak{a}_0^h \mathfrak{a}_1^{h+1} \cdots \mathfrak{a}_{h-1}^{2h -1}) \leq x} 1,
\end{equation}
where the sum is taken over all ideals $\mathfrak{a}_0, \ldots, \mathfrak{a}_{h-1}$. Clearly, $\mathcal{N}_h(x) \leq \mathcal{T}_h(x)$. Thus, to prove an estimate for $\mathcal{N}_h(x)$, we start by proving an estimate for $\mathcal{T}_h(x)$. We prove:
\begin{lma}\label{T_h(X)bound}
For any $x > 1$ and any integer $h \geq 2$, we have
$$\mathcal{T}_h(x) = \kappa \left( \prod_{i=1}^{h-1} \zeta_K \left( 1+i/h \right) \right) x^{1/h} + O_h \big( R_{\mathcal{T}_h}(x) \big),$$
where
\begin{equation}\label{E1(x)}
    R_{\mathcal{T}_h}(x) = \begin{cases}
        x^{\frac{1}{h} \left( 1 - \frac{2}{n_K+1}\right)} & \text{ if } \frac{h}{h+1} < 1 - \frac{2}{n_{\scaleto{K}{3pt}} + 1}, \\
        x^{\frac{1}{h} \left( 1 - \frac{2}{n_K+1}\right)} (\log x) & \text{ if } \frac{h}{h+1} = 1 - \frac{2}{n_{\scaleto{K}{3pt}} + 1}, \text{ and } \\
        x^{1/(h+1)} & \text{ if } \frac{h}{h+1} > 1 - \frac{2}{n_{\scaleto{K}{3pt}} + 1}.
    \end{cases}
    \end{equation}
\end{lma}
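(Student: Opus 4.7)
The plan is to fix $(\mathfrak{a}_1, \ldots, \mathfrak{a}_{h-1})$, evaluate the inner count over $\mathfrak{a}_0$ using Landau's asymptotic \eqref{Landaueq}, and then estimate the resulting two outer sums via \lmaref{boundnm}. Writing $B := \prod_{i=1}^{h-1} N(\mathfrak{a}_i)^{h+i}$ and $\eta := 1 - 2/(n_K+1)$, each inner count equals $\kappa(x/B)^{1/h} + O\bigl((x/B)^{\eta/h}\bigr)$, so
\[
\mathcal{T}_h(x) = \kappa\, x^{1/h}\, M_1(x) + O\!\bigl(x^{\eta/h} M_2(x)\bigr),
\]
where $M_1(x) := \sum_{B \leq x} B^{-1/h}$ and $M_2(x) := \sum_{B \leq x} B^{-\eta/h}$, both summed over $(h-1)$-tuples of ideals.

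For $M_1(x)$, the unrestricted sum factors as $\prod_{i=1}^{h-1} \zeta_K(1 + i/h)$ since each exponent $1+i/h$ exceeds $1$. To control the tail $\sum_{B > x} B^{-1/h}$, I would sum first over $\mathfrak{a}_1$ (whose exponent $(h+1)/h$ is minimal) via Part 7 of \lmaref{boundnm}, obtaining $\ll (x/\tilde B)^{-1/(h(h+1))}$ with $\tilde B = \prod_{i \geq 2} N(\mathfrak{a}_i)^{h+i}$. The exponent cancellation rewrites the remaining weight as $\prod_{i \geq 2} N(\mathfrak{a}_i)^{-(h+i)/(h+1)}$ whose exponents are all $> 1$, so Part 3 yields convergence. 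Consequently $\kappa x^{1/h} M_1(x) = \kappa x^{1/h} \prod_{i=1}^{h-1} \zeta_K(1+i/h) + O(x^{1/(h+1)})$, producing the claimed main term together with an error absorbable by $R_{\mathcal{T}_h}(x)$.

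For $M_2(x)$, I would split into three cases according to the size of $(h+1)\eta/h$, which is exactly the trichotomy in \eqref{E1(x)}. When $(h+1)\eta/h > 1$ (that is, $h/(h+1) < \eta$), every exponent $(h+i)\eta/h$ exceeds $1$, so $M_2(x) = O(1)$ by Part 3 and the error is $O(x^{\eta/h})$. At equality, the inner $\mathfrak{a}_1$-sum is $\sum_{N(\mathfrak{a}_1) \leq (x/C)^{1/(h+1)}} N(\mathfrak{a}_1)^{-1} \ll \log x$ by Part 5, while the remaining factors converge, producing $O(x^{\eta/h}\log x)$. When $(h+1)\eta/h < 1$, Part 2 bounds the $\mathfrak{a}_1$-sum by $(x/C)^{1/(h+1)-\eta/h}$ with $C = \prod_{i \geq 2} N(\mathfrak{a}_i)^{h+i}$; extracting $x^{1/(h+1)-\eta/h}$ leaves residual exponents $(h+i)/(h+1) > 1$ on the other variables (Part 3 again), yielding $O(x^{1/(h+1)})$.

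The principal obstacle is this last case: a Rankin-style reorganization of $B^{-\eta/h}$ must cancel exactly against the factor extracted from the innermost sum so that the leftover product over $i \geq 2$ stays convergent. This is also the regime in which the $M_1$-tail and the $M_2$-contribution coincide in size at $x^{1/(h+1)}$, so tracking the exponent arithmetic simultaneously is essential. Everything else is a direct invocation of \lmaref{boundnm} combined with the elementary factorization of absolutely convergent zeta-like products.
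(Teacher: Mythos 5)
Your proposal is correct and follows essentially the same route as the paper's proof: peel off the $\mathfrak{a}_0$-sum via Landau's asymptotic \eqref{Landaueq}, identify the main-term coefficient as the absolutely convergent Euler product $\prod_{i=1}^{h-1}\zeta_K(1+i/h)$, and control the tail of the coefficient sum and the error sum with Parts 2, 3, 5, and 7 of \lmaref{boundnm} through the same exponent arithmetic. The only organizational difference is that the paper iterates the $\mathfrak{a}_j$-sums one at a time, accumulating $\zeta_K$ factors and tail errors in tandem, whereas you factor the unrestricted $M_1$ sum at once and bound its tail separately; and in the regime $(h+1)\eta/h < 1$ you spell out explicitly the Rankin-style cancellation (peeling $x^{1/(h+1)-\eta/h}$ off the $\mathfrak{a}_1$-sum to upgrade the residual exponents on $\mathfrak{a}_2,\dots,\mathfrak{a}_{h-1}$ to $(h+i)/(h+1)>1$), a step the paper compresses into a single $\ll$ chain. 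Both are the same calculation; your version makes visible the accounting the paper leaves implicit.
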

\begin{proof}
Using \eqref{Landaueq} and $\upsilon = 1 - 2/(n_K+1) < 1$, we have
\begin{align}\label{Teq1}
    \mathcal{T}_h(x) & = \sum_{N(\mathfrak{a}_1^{h+1} \cdots \mathfrak{a}_{h-1}^{2h -1}) \leq x} \sum_{N(\mathfrak{a}_0^h) \leq \frac{x}{N(\mathfrak{a}_1^{h+1} \cdots \mathfrak{a}_{h-1}^{2h -1})}} 1 \notag\\
    & = \sum_{N(\mathfrak{a}_1^{h+1} \cdots \mathfrak{a}_{h-1}^{2h -1}) \leq x} \left( \frac{\kappa}{(N(\mathfrak{a}_1^{h+1} \cdots \mathfrak{a}_{h-1}^{2h -1}))^{1/h}} x^{1/h} + O \left( \left( \frac{x^{1/h}}{(N(\mathfrak{a}_1^{h+1} \cdots \mathfrak{a}_{h-1}^{2h -1}))^{1/h}} \right)^\upsilon \right)\right) \notag\\
    & = \kappa x^{1/h} \mathfrak{T}_1 + O(\mathfrak{T}_2),
\end{align}  

where $\mathfrak{T}_1$ and $\mathfrak{T}_2$ are defined as
$$\mathfrak{T}_1 = \sum_{N(\mathfrak{a}_1^{h+1} \cdots \mathfrak{a}_{h-1}^{2h -1}) \leq x} \frac{1}{(N(\mathfrak{a}_1^{h+1} \cdots \mathfrak{a}_{h-1}^{2h -1}))^{1/h}},$$
and
$$\mathfrak{T}_2 = x^{\upsilon/h} \sum_{N(\mathfrak{a}_1^{h+1} \cdots \mathfrak{a}_{h-1}^{2h -1}) \leq x} \frac{1}{(N(\mathfrak{a}_1^{h+1} \cdots \mathfrak{a}_{h-1}^{2h -1}))^{\upsilon/h}}.$$
Notice that, by Part 7 of \lmaref{boundnm}, we have
\begin{align*}
    \mathfrak{T}_1 & = \sum_{N(\mathfrak{a}_2^{h+2} \cdots \mathfrak{a}_{h-1}^{2h -1}) \leq x} \frac{1}{(N(\mathfrak{a}_2^{h+2} \cdots \mathfrak{a}_{h-1}^{2h -1}))^{1/h}} \sum_{N ( \mathfrak{a}_1^{h+1}) \leq \frac{x}{N \left(\mathfrak{a}_2^{h+2} \cdots \mathfrak{a}_{h-1}^{2h -1} \right)}} \frac{1}{\mathfrak{a}_1^{1+1/h}} \\
    & = \sum_{N(\mathfrak{a}_2^{h+2} \cdots \mathfrak{a}_{h-1}^{2h -1}) \leq x} \frac{1}{(N(\mathfrak{a}_2^{h+2} \cdots \mathfrak{a}_{h-1}^{2h -1}))^{1/h}} \bigg( \zeta_K(1+1/h) \\
    & \hspace{.5cm} + O \bigg( \sum_{N(\mathfrak{a}_1) > \frac{x^{1/(h+1)}}{\left(N \left(\mathfrak{a}_2^{h+2} \cdots \mathfrak{a}_{h-1}^{2h -1} \right) \right)^{1/(h+1)}}} \frac{1}{\mathfrak{a}_1^{1+1/h}} \bigg) \bigg) \\
    & = \zeta_K(1+1/h) \sum_{N(\mathfrak{a}_2^{h+2} \cdots \mathfrak{a}_{h-1}^{2h -1}) \leq x} \frac{1}{(N(\mathfrak{a}_2^{h+2} \cdots \mathfrak{a}_{h-1}^{2h -1}))^{1/h}} \\
    & \hspace{.5cm} + O \left( \frac{1}{x^{1/(h(h+1))}}  \sum_{N(\mathfrak{a}_2^{h+2} \cdots \mathfrak{a}_{h-1}^{2h -1}) \leq x} \frac{1}{(N(\mathfrak{a}_2^{h+2} \cdots \mathfrak{a}_{h-1}^{2h -1}))^{1/(h+1)}} \right).
\end{align*}
Note that, for any $h \geq 2$, the sum inside the error term above is $O_h(1)$. Thus, we deduce
$$\mathfrak{T}_1 =  \zeta_K(1+1/h) \sum_{N(\mathfrak{a}_2^{h+2} \cdots \mathfrak{a}_{h-1}^{2h -1}) \leq x} \frac{1}{(N(\mathfrak{a}_2^{h+2} \cdots \mathfrak{a}_{h-1}^{2h -1}))^{1/h}} +  O_h \left( \frac{1}{x^{1/(h(h+1))}} \right).$$
Repeating this process $h-1$ times and realizing that $x^{-i/(h(h+i))} \leq x^{-1/(h(h+1))}$ for all $i \geq 1$, we obtain
\begin{align*}
    \mathfrak{T}_1 & = \prod_{i=1}^{2} \zeta_K(1+i/h) \sum_{N(\mathfrak{a}_3^{h+3} \cdots \mathfrak{a}_{h-1}^{2h -1}) \leq x} \frac{1}{(N(\mathfrak{a}_3^{h+3} \cdots \mathfrak{a}_{h-1}^{2h -1}))^{1/h}} +  O_h \left( \frac{1}{x^{1/(h(h+1))}} \right) \notag \\
    & \vdots \notag \\
    & = \prod_{i=1}^{h-2} \zeta_K(1+i/h) \sum_{N(\mathfrak{a}_{h-1}^{2h -1}) \leq x} \frac{1}{(N(\mathfrak{a}_{h-1}^{2h -1}))^{1/h}} +  O_h \left( \frac{1}{x^{1/(h(h+1))}} \right) \notag \\
    & = \prod_{i=1}^{h-1} \zeta_K(1+i/h) + O_h \left( \frac{1}{x^{1/(h(h+1))}} \right).
\end{align*}
Thus,
\begin{equation*}
\kappa x^{1/h} \mathfrak{T}_1 = \kappa \left( \prod_{i=1}^{h-1} \zeta_K \left( 1+i/h \right) \right) x^{1/h} + O_h \left( x^{1/(h+1)} \right).   
\end{equation*}
Next, we bound the term $\mathfrak{T}_2$. Note that, by Parts 2, 3, and 5 of \lmaref{boundnm}, we have
\begin{equation*}
    \mathfrak{T}_2 \ll x^{\upsilon/h} \sum_{N(\mathfrak{a}_{\scaleto{1}{3pt}}) \leq x^{1/(h+1)}} \frac{1}{\mathfrak{a}_1^{\upsilon(1+1/h)}} \ll_h \begin{cases}
        x^{\upsilon/h} & \text{ if } \upsilon(1+1/h) > 1, \\
        x^{\upsilon/h} (\log x) & \text{ if } \upsilon(1+1/h) = 1, \text{ and } \\
        x^{1/(h+1)} & \text{ if } \upsilon(1+1/h) < 1.
    \end{cases}
\end{equation*}
Combining the last two results with \eqref{Teq1} completes the proof.
\end{proof}
Using the last lemma, we establish the distribution result for $h$-full ideals. We prove:
\begin{proof}[\textbf{Proof of \thmref{hfullideals}}]
Using \eqref{nhkx1}, \eqref{T_h(x)} and \lmaref{T_h(X)bound}, we have
\begin{align}\label{hfulleqn1}
    \mathcal{N}_h(x) & = \sum_{m \leq x} g_h(m) \sum_{n \leq x/m} l_h(n) \notag \\
    & = \sum_{m \leq x} g_h(m) \mathcal{T}_h(x/m) \notag \\
    & = \kappa \left( \prod_{i=1}^{h-1} \zeta_K \left( 1+i/h \right) \right) x^{1/h} \sum_{m \leq x} \frac{g_h(m)}{m^{1/h}} + O \left( \sum_{m \leq x} g_h(m) R_{\mathcal{T}_h}(x/m) \right).
\end{align}
Recall that 
$$\mathcal{G}_h(s) = \sum_{n=1}^\infty g_h(n) n^{-s} = \frac{\phi_h^K(s)}{\zeta_K((2h+2)s)}$$
converges absolutely in $\Re(s) > 1/(2h+2)$. Therefore, for any $\epsilon > 0$,
$$\sum_{n \leq x} g_h(n) \ll x^{(1/(2h+2))+ \epsilon},$$
and thus, by partial summation,
\begin{equation}\label{hfulleqn2}
    \sum_{m \leq x} \frac{g_h(m)}{m^{1/h}} = \mathcal{G}_h (1/h) + O \left( \sum_{m > x} \frac{g_h(m)}{m^{1/h}} \right) = \mathcal{G}_h(1/h) + O(x^{(1/(2h+2)) + \epsilon - 1/h}).
\end{equation}
Moreover, by the definition of $R_{\mathcal{T}_h}(x)$ \eqref{E1(x)}, we have
\begin{equation*}
    \sum_{m \leq x} g_h(m) R_{\mathcal{T}_h}(x/m) = \begin{cases}
        x^{\frac{1}{h} \left( 1 - \frac{2}{n_{\scaleto{K}{2.5pt}}+1}\right)} \sum_{m \leq x} \frac{g_h(m)}{m^{(1 - 2/(n_{\scaleto{K}{2.5pt}}+1))/h}} & \text{ if } \frac{h}{h+1} < 1 - \frac{2}{n_{\scaleto{K}{3pt}} + 1}, \\
        x^{\frac{1}{h+1}} \sum_{m \leq x} \frac{g_h(m)}{m^{1/(h+1)}} (\log (x/m)) & \text{ if } \frac{h}{h+1} = 1 - \frac{2}{n_{\scaleto{K}{3pt}} + 1}, \\
        x^{\frac{1}{h+1}} \sum_{m \leq x} \frac{g_h(m)}{m^{1/(h+1)}} & \text{ if } \frac{h}{h+1} > 1 - \frac{2}{n_{\scaleto{K}{3pt}} + 1}.
    \end{cases}
    \end{equation*}
Again, since $\mathcal{G}_h(s)$ converges at $1/(h+1)$, for the third case above, we have
$$x^{\frac{1}{h+1}} \sum_{m \leq x} \frac{g_h(m)}{m^{1/(h+1)}} \ll_h x^{\frac{1}{h+1}},$$
and for the middle case, we have
\begin{equation*}
    x^{\frac{1}{h+1}} \sum_{m \leq x} \frac{g_h(m)}{m^{1/(h+1)}} (\log (x/m)) \ll_h x^{\frac{1}{h+1}} (\log x).
\end{equation*}
Moreover, for the first case, since $(1 - (2/(n_{\scaleto{K}{4pt}} + 1))/h > 1/(h+1)$, thus
$$x^{\frac{1}{h} \left( 1 - \frac{2}{n_{\scaleto{K}{3pt}}+1}\right)} \sum_{m \leq x} \frac{g_h(m)}{m^{(1 - 2/(n_{\scaleto{K}{3pt}}+1))/h}} \ll x^{\frac{1}{h} \left( 1 - \frac{2}{n_{\scaleto{K}{3pt}}+1}\right)} \sum_{m \leq x} \frac{g_h(m)}{m^{1/(h+1)}} \ll_h x^{\frac{1}{h} \left( 1 - \frac{2}{n_{\scaleto{K}{3pt}}+1}\right)}.$$
Putting the last four results together, we have
\begin{equation}\label{sumwithE_1}
    \sum_{m \leq x} g_h(m) R_{\mathcal{T}_h}(x/m) = \begin{cases}
        x^{\frac{1}{h} \left( 1 - \frac{2}{n_{\scaleto{K}{2.5pt}}+1}\right)} & \text{ if } \frac{h}{h+1} < 1 - \frac{2}{n_{\scaleto{K}{3pt}} + 1}, \\
        x^{\frac{1}{h+1}} (\log x) & \text{ if } \frac{h}{h+1} = 1 - \frac{2}{n_{\scaleto{K}{3pt}} + 1}, \text{ and}\\
        x^{\frac{1}{h+1}} & \text{ if } \frac{h}{h+1} > 1 - \frac{2}{n_{\scaleto{K}{3pt}} + 1}.
    \end{cases} 
\end{equation}
Combining the above together with \eqref{hfulleqn1} and \eqref{hfulleqn2}, we obtain
\begin{equation*}
    \mathcal{N}_h(x) = \kappa \left( \prod_{i=1}^{h-1} \zeta_K \left( 1+i/h \right) \right) \mathcal{G}_h(1/h) x^{1/h} + O \big( R_{\mathcal{N}_h}(x) \big),
\end{equation*}
where $R_{\mathcal{N}_h}(x)$ is defined in \eqref{E2(x)}. Finally, by \eqref{Nhxformula}, \eqref{formulanhk} and \eqref{dedekind}, we have
\begin{align*}
\left( \prod_{i=1}^{h-1} \zeta_K \left( 1+i/h \right) \right) \mathcal{G}_h(1/h) & = \prod_\wp \left( 1 +  \frac{N(\wp)^{-1}}{1 - N(\wp)^{-1/h}} \right) \left( 1 - \frac{1}{N(\wp)} \right) \\
& = \prod_\wp \left( 1 + \frac{N(\wp) - N(\wp)^{1/h}}{N(\wp)^2 \left( N(\wp)^{1/h} - 1 \right)}\right),
\end{align*}
which is a convergent product, since the left-hand side is a finite value. This completes the proof.
\end{proof}
We use the technique in the previous lemma to establish the distribution of $h$-full ideals which are co-prime to a given prime ideal. We prove:
\begin{lma}\label{hfullidealsrestrict}
Let $h \geq 2$ be an integer. Let $\ell$ be a fixed prime ideal and $\mathcal{N}_{h,\ell}(x)$ denote the set of $h$-full ideals with norm $N(\cdot)$ less than or equal to $x$ and coprime to $\ell$. Let $\gamma_{\scaleto{h}{4.5pt}}$ be defined in \eqref{gammahk} and $R_{\mathcal{N}_h}(x)$ be defined in \eqref{E2(x)}. Then, we have
$$|\mathcal{N}_{h,\ell}(x)| = \frac{\kappa \gamma_{\scaleto{h}{4.5pt}}}{\left( 1+ \frac{N(\ell)^{-1}}{1-N(\ell)^{-1/h}} \right)} x^{1/h} + O_h \big( R_{\mathcal{N}_h}(x) \big).$$
\end{lma}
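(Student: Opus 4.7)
The plan is to mirror the proof of Theorem \ref{hfullideals}, restricting every sum and Euler product to ideals coprime to $\ell$. First I would decompose each $\mathfrak{m} \in \mathcal{N}_{h,\ell}$ as $\mathfrak{m} = \mathfrak{a}_0^h \mathfrak{a}_1^{h+1}\cdots \mathfrak{a}_{h-1}^{2h-1}$ with each $\mathfrak{a}_i$ coprime to $\ell$, and write the associated Dirichlet series
$$\mathfrak{N}_{h,\ell}(s) = \prod_{\wp \neq \ell} \left(1 + \frac{N(\wp)^{-hs}}{1 - N(\wp)^{-s}}\right) = \mathcal{G}_{h,\ell}(s) \, \mathcal{L}_{h,\ell}(s),$$
where $\mathcal{L}_{h,\ell}(s) := \prod_{i=0}^{h-1} \zeta_{K,\ell}((h+i)s)$ with $\zeta_{K,\ell}(s) := (1-N(\ell)^{-s})\zeta_K(s)$. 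Applying the identity \eqref{iden2} at each $\wp \neq \ell$ shows that $\mathcal{G}_{h,\ell}(s)$ is represented by a Dirichlet series converging absolutely in $\Re(s) > 1/(2h+2)$, parallel to the role of $\mathcal{G}_h(s)$.

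Next I would define the restricted counting function $\mathcal{T}_{h,\ell}(x)$ as in \eqref{T_h(x)} but with every $\mathfrak{a}_i$ required to be coprime to $\ell$, and establish its asymptotic by repeating the nested-sum argument of Lemma \ref{T_h(X)bound}. The only modifications are: (i) the innermost sum over $\mathfrak{a}_0$ uses the restricted Landau count $I_K(z) - I_K(z/N(\ell)) = \kappa(1 - 1/N(\ell))z + O(z^{1-2/(n_K+1)})$ (exactly as in Lemma \ref{hfreeidealrestrict}), and (ii) each successive sum over $\mathfrak{a}_i$ with $(\mathfrak{a}_i,\ell)=1$ converges to $\zeta_K(1+i/h)(1 - N(\ell)^{-(1+i/h)})$ rather than $\zeta_K(1+i/h)$. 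Since the restriction only shrinks the sums, the error analysis from Lemma \ref{T_h(X)bound} carries over verbatim, giving
$$\mathcal{T}_{h,\ell}(x) = \kappa (1 - 1/N(\ell)) \prod_{i=1}^{h-1} \zeta_K(1+i/h)\bigl(1 - N(\ell)^{-(1+i/h)}\bigr) \, x^{1/h} + O_h(R_{\mathcal{T}_h}(x)).$$

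Then, precisely as in the proof of Theorem \ref{hfullideals}, I would write $|\mathcal{N}_{h,\ell}(x)| = \sum_{m} g_{h,\ell}(m) \, \mathcal{T}_{h,\ell}(x/m)$ where $g_{h,\ell}$ denotes the Dirichlet coefficients of $\mathcal{G}_{h,\ell}$, and apply partial summation using the absolute convergence of $\mathcal{G}_{h,\ell}(s)$ on $\Re(s) > 1/(2h+2)$. This produces a main term
$$\kappa (1 - 1/N(\ell)) \prod_{i=1}^{h-1} \zeta_K(1+i/h)\bigl(1 - N(\ell)^{-(1+i/h)}\bigr) \, \mathcal{G}_{h,\ell}(1/h) \, x^{1/h}$$
with error $O_h(R_{\mathcal{N}_h}(x))$.

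Finally, to match the stated constant I simplify algebraically using $\mathcal{G}_{h,\ell}(s)/\mathcal{G}_h(s) = 1/[F_\ell(s) E_\ell(s)]$, where $F_\ell(s) = 1 + N(\ell)^{-hs}/(1-N(\ell)^{-s})$ and $E_\ell(s) = \prod_{i=0}^{h-1}(1 - N(\ell)^{-(h+i)s})$. At $s = 1/h$, the factor $E_\ell(1/h) = (1 - N(\ell)^{-1}) \prod_{i=1}^{h-1}(1 - N(\ell)^{-(1+i/h)})$ cancels exactly against the corresponding product appearing above, so the leading constant collapses to $\kappa \gamma_{\scaleto{h}{4.5pt}} / F_\ell(1/h)$, which is the stated value. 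I expect the main obstacle to be this bookkeeping of Euler factors at $\ell$, together with verifying that the tail estimates for $\sum_m g_{h,\ell}(m) R_{\mathcal{T}_h}(x/m)$ really produce $O_h(R_{\mathcal{N}_h}(x))$ uniformly in $\ell$, as in the computation \eqref{sumwithE_1}.
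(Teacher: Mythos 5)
Your proof is correct, but it takes a different factorization of the Dirichlet series from the one in the paper, and the difference is worth noting. You split $\mathfrak{N}_{h,\ell}(s) = \mathcal{G}_{h,\ell}(s)\,\mathcal{L}_{h,\ell}(s)$ with the $\ell$-depleted zeta factors $\zeta_{K,\ell}((h+i)s)$ baked into $\mathcal{L}_{h,\ell}$, which forces you to prove a new asymptotic for an $\ell$-restricted $\mathcal{T}_{h,\ell}(x)$ by rerunning the nested-sum argument of Lemma~\ref{T_h(X)bound} (that part is fine: the restriction only shrinks the sums, so the error bounds persist, and each completed sum picks up a factor $1-N(\ell)^{-(1+i/h)}$). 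The paper instead keeps $\mathcal{L}_h(s) = \zeta_K(hs)\cdots\zeta_K((2h-1)s)$ exactly as in the unrestricted case and absorbs the entire $\ell$-dependence into $\mathcal{G}_{h,\ell}(s) = \phi_h^K(s)\big/\big[\big(1+\tfrac{N(\ell)^{-hs}}{1-N(\ell)^{-s}}\big)\zeta_K((2h+2)s)\big]$, which is still absolutely convergent for $\Re(s) > 1/(2h+2)$ since dividing by a single Euler factor at $\ell$ does not change the abscissa. This lets the paper reuse Lemma~\ref{T_h(X)bound} verbatim via $|\mathcal{N}_{h,\ell}(x)| = \sum_m g_{h,\ell}(m)\,\mathcal{T}_h(x/m)$, with no new lemma needed, and the constant $\kappa\gamma_h\big/\big(1+\tfrac{N(\ell)^{-1}}{1-N(\ell)^{-1/h}}\big)$ falls out in one step since $\mathcal{G}_{h,\ell}(1/h) = \mathcal{G}_h(1/h)/F_\ell(1/h)$. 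Your route requires the extra bookkeeping of $E_\ell(s) = \prod_{i=0}^{h-1}(1-N(\ell)^{-(h+i)s})$ cancelling between $\mathcal{T}_{h,\ell}$ and your $\mathcal{G}_{h,\ell}$, which you handled correctly; the paper's factorization just avoids creating $E_\ell$ in the first place.
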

\begin{proof}
Similar to \eqref{formulanhk}, we have
\begin{equation}\label{nhlks}
   \mathfrak{N}_{h,\ell}(s) := \sum_{\substack{\mathfrak{m} \in \mathcal{N}_h \\ (\mathfrak{m},\ell)=1}} \frac{1}{N(\mathfrak{m})^s} = \prod_{\substack{\wp \\ \wp \neq \ell}} \left( 1 +  \frac{N(\wp)^{-hs}}{1 - N(\wp)^{-s}} \right). 
\end{equation}
Moreover, by \eqref{nhks}, we can write
$$\mathfrak{N}_{h,\ell}(s) = \mathcal{G}_{h,\ell}(s) \mathcal{L}_h(s),$$
where $\mathcal{L}_h(s)$ is defined in \eqref{L_h(s)} and
\begin{equation}\label{Ghkl(s)}
\mathcal{G}_{h,\ell}(s) = \sum_{n \leq x} g_{h,\ell}(n) n^{-s}= \frac{\phi_h^K(s)}{ \left( 1 +  \frac{N(\ell)^{-hs}}{1 - N(\ell)^{-s}} \right)\zeta_K((2h+2)s)},
\end{equation}
where $\phi_k^K(s)$ is defined in \eqref{Nhxformula}. Since, $\phi_h^K(s)/ \zeta_K((2h+2)s)$ converges absolutely in $\Re(s) > 1/(2h+2)$, we can deduce that $\mathcal{G}_{h,\ell}(s)$ converges absolutely in $\Re(s) > 1/(2h+2)$ as well (see \cite[Page 14]{dkl2}). Thus, \eqref{T_h(x)} and \lmaref{T_h(X)bound}, we have
\begin{align*}\label{nhlkx1}
    \mathcal{N}_{h,\ell}(x) & = \sum_{m n \leq x} g_{h,\ell}(m) l_h(n) \\
    & = \sum_{m \leq x} g_{h,\ell}(m) \mathcal{T}_h(x/m) \\
    & = \kappa \left( \prod_{i=1}^{h-1} \zeta_K \left( 1+i/h \right) \right) x^{1/h} \sum_{m \leq x} \frac{g_{h,\ell}(m)}{m^{1/h}} + O \left( \sum_{m \leq x} g_{h,\ell}(m) R_{\mathcal{T}_h}(x/m) \right).
\end{align*}
Since $\mathcal{G}_{h,\ell}(s)$ converges absolutely in $\Re(s) > 1/(2h+2)$, thus, for any $\epsilon > 0$,
$$\sum_{n \leq x} g_{h,\ell}(n) \ll x^{(1/(2h+2))+ \epsilon},$$
and thus, by partial summation,
\begin{equation}\label{hfulleqn2l}
    \sum_{m \leq x} \frac{g_{h,\ell}(m)}{m^{1/h}} = \mathcal{G}_{h,\ell}(1/h) + O \left( \sum_{m > x} \frac{g_{h,\ell}(m)}{m^{1/h}} \right) = \mathcal{G}_{h,\ell}(1/h) + O(x^{(1/(2h+2)) + \epsilon - 1/h}).
\end{equation}
Moreover, working similar to \eqref{sumwithE_1}, we obtain
\begin{equation*}\label{sumwithE_1l}
    \sum_{m \leq x} g_{h,\ell}(m) R_{\mathcal{T}_h}(x/m) = O \big( R_{\mathcal{N}_h}(x) \big),
\end{equation*}
where $R_{\mathcal{N}_h}(x)$ is defined in \eqref{E2(x)}. Combining the last four results, we obtain
\begin{equation*}
    \mathcal{N}_{h,\ell}(x) = \kappa \left( \prod_{i=1}^{h-1} \zeta_K \left( 1+i/h \right) \right) \mathcal{G}_{h,\ell}(1/h) x^{1/h} + O \big( R_{\mathcal{N}_h}(x) \big).
\end{equation*}
Finally, by \eqref{nhlks}, \eqref{Ghkl(s)}, \eqref{Nhxformula}, and \eqref{dedekind}, we obtain
\begin{align*}
\left( \prod_{i=1}^{h-1} \zeta_K \left( 1+i/h \right) \right) \mathcal{G}_{h,\ell}(1/h) & = \prod_{\substack{\wp \\ \wp \neq \ell}}  \left( 1 +  \frac{N(\wp)^{-1}}{1 - N(\wp)^{-1/h}} \right) \prod_\wp \left( 1 - \frac{1}{N(\wp)} \right) \\
& = \frac{\gamma_{\scaleto{h}{4.5pt}}}{\left( 1+ \frac{N(\ell)^{-1}}{1-N(\ell)^{-1/h}} \right)},
\end{align*}
where $\gamma_{\scaleto{h}{4.5pt}}$ is defined in \eqref{gammahk}. This completes the proof.
\end{proof}
Using the results discussed in this subsection, we prove the following Erd\H{o}s-Kac theorem over $h$-full ideals in the number field $K$. We achieve this by applying \thmref{yurugen} with $\mathcal{S} = \mathcal{N}_h$, $\beta = 1/h$, and $y = x^{1/(h \log \log x)}$. We prove:
\begin{proof}[\textbf{Proof of \thmref{erdoskachfullideal}}]
Consider the set $\mathcal{S} = \mathcal{N}_h$. By \thmref{hfullideals}, we have
$$|\mathcal{S}(x)| = \kappa \gamma_{\scaleto{h}{4.5pt}} x^{1/h} + O_h \big( R_{\mathcal{N}_h}(x) \big),$$
where $R_{\mathcal{N}_h}(x) \ll x^{\xi/h}$ for some $0 < \xi < 1$.
Therefore, 
$$\frac{|\mathcal{S}(x^{1/2})|}{|\mathcal{S}(x)|} \ll \frac{1}{x^{1/(2h)}},$$
and hence, $|\mathcal{S}(x^{1/2})| = o \left( |\mathcal{S}(x)| \right)$ is satisfied. For a fixed prime ideal $\wp$, let 
$$\mathcal{S}_\wp(x) := \left\{ \mathfrak{m} \in \mathcal{N}_h(x) \ : \ \wp | \mathfrak{m} \right\}.$$
Using \lmaref{hfullidealsrestrict} with $R_{\mathcal{N}_h}(x) \ll x^{\xi/h}$, we obtain
\begin{align}\label{npx}
    |\mathcal{S}_\wp(x)| 
    & = \sum_{k=h}^{\left\lfloor \frac{\log x}{\log N(\wp)} \right\rfloor} \left| \mathcal{N}_{h,\wp}(x/N(\wp)^k) \right|  \notag\\
    & = \left( \sum_{k=h}^{\infty} \frac{1}{N(\wp)^{k/h}} \right)\frac{ \kappa \gamma_{\scaleto{h}{4.5pt}}}{\left( 1+ \frac{N(\wp)^{-1}}{1-N(\wp)^{-1/h}} \right)} x^{1/h} + O_h \left( x^{\xi/h}   \sum_{k=h}^{\left\lfloor \frac{\log x}{\log N(\wp)} \right\rfloor}  \frac{1}{N(\wp)^{k\xi/h}} \right) \notag \\
    & = \frac{\kappa \gamma_{\scaleto{h}{4.5pt}}}{N(\wp) (1-N(\wp)^{-1/h}+N(\wp)^{-1})} x^{1/h} + O_h \left( \frac{x^{\xi/h}}{N(\wp)^{\xi}} \right).
\end{align}
Thus,
\begin{align*}
    \frac{|\mathcal{S}_\wp(x)|}{|\mathcal{S}(x)|} 
    & = \lambda_\wp + e_\wp(x),
\end{align*}
where $\lambda_\wp = \frac{1}{N(\wp) (1-N(\wp)^{-1/h}+N(\wp)^{-1})}$ and $e_\wp(x) = O_h \left( \frac{1}{x^{(1 - \xi)/h} N(\wp)^{\xi}} \right)$. 

Next, we choose $y = x^\frac{1}{h \log \log x}$, and $\beta = 1/h$, and check again that all the conditions in \thmref{yurugen} hold true. Note that the set in Condition (a) is empty and thus the condition holds trivially. By Part 4 of \lmaref{boundnm}, we obtain 
\begin{align*}
    \sum_{x^\frac{1}{h \log \log x} < N(\wp) \leq x^{1/h}} \lambda_\wp \ll \sum_{x^\frac{1}{h \log \log x} < N(\wp) \leq x^{1/h}} \frac{1}{N(\wp)} \ll \log \log \log x,
\end{align*}
which makes Condition (b) true. Using Part 1 of \lmaref{boundnm}, we have
$$\sum_{x^\frac{1}{h\log \log x} < N(\wp) \leq x^{1/h}} |e_\wp(x)|  \ll_h \frac{1}{x^{(1-\xi)/h}} \sum_{N(\wp) \leq x^{1/h}} \frac{1}{N(\wp)^{\xi}} \ll_h \frac{1}{\log x},$$
which makes Condition (c) true. Using 
$$ \frac{1}{N(\wp)(1- N(\wp)^{-1/h} + N(\wp)^{-1})} = \frac{1}{N(\wp)} + \frac{N(\wp)^{-1/h}-N(\wp)^{-1}}{N(\wp)(1 - N(\wp)^{-1/h} + N(\wp)^{-1})},$$
and Parts 3 and 4 of \lmaref{boundnm} again, we obtain
\begin{align*}
\sum_{N(\wp) \leq x^\frac{1}{h \log \log x}} \lambda_\wp & = \sum_{N(\wp) \leq x^\frac{1}{h \log \log x}} \frac{1}{N(\wp)} + O(1) \\
& = \log \log x + O (\log \log \log x),
\end{align*}
which makes Condition (d) true. Finally, again using Part 3 of \lmaref{boundnm} with $\alpha =2$, we have
$$\sum_{N(\wp) \leq x^\frac{1}{h \log \log x}} \lambda_\wp^2 \ll \sum_{N(\wp) \leq x^\frac{1}{h \log \log x}} \frac{1}{N(\wp)^2} \ll O(1).$$
This makes Condition (e) true. Thus, similar to the case of $h$-free ideals, we are only required to verify Condition (f). Using \eqref{npx} and the Chinese Remainder Theorem, we obtain, for distinct prime ideals $\wp_{\scaleto{1}{3pt}},\cdots,\wp_{\scaleto{u}{3pt}}$, 
\begin{align*}
    & \left| \left\{ \mathfrak{m} \in \mathcal{S}(x) \ : \ \wp_i | \mathfrak{m} \text{ for all } i \in \{ 1, 2 ,\cdots, u\} \right\} \right| \\
    & = \left( \prod_{i=1}^u \frac{1}{N(\wp_i) (1-N(\wp_i)^{-1/h}+N(\wp_i)^{-1})}  \right) \kappa \gamma_{\scaleto{h}{4.5pt}} x^{1/h} + O_h \left( \frac{x^{\xi/h}}{\prod_{i=1}^u N(\wp_i)^{\xi}} \right).
\end{align*}
Thus
\begin{align*}
    & \frac{\left| \left\{ \mathfrak{m} \in \mathcal{S}(x) \ : \ \wp_i | \mathfrak{m} \text{ for all } i \in \{ 1, 2 ,\cdots, u\} \right\} \right|}{|\mathcal{S}(x)|} \\
    & = \prod_{i=1}^u \frac{1}{N(\wp_i) (1-N(\wp_i)^{-1/h}+N(\wp_i)^{-1})} + e_{\wp_{\scaleto{1}{3pt}} \cdots \wp_{\scaleto{u}{3pt}}}(x),
\end{align*}
where
$$|e_{\wp_{\scaleto{1}{3pt}} \cdots \wp_{\scaleto{u}{3pt}}}(x)| \ll_h \frac{1}{x^{(1 - \xi)/h}}  \frac{1}{\prod_{i=1}^u N(\wp_i)^{\xi}}.$$
Let $r \in \mathbb{N}$. By the definition of $\sum{\vphantom{\sum}}''$ in the conditions mentioned before \thmref{yurugen}, and using Part 1 of \lmaref{boundnm} and $x^{(1-\xi)/(h \log \log x)} = o(x^\epsilon)$ for any small $\epsilon > 0$, we have
\begin{align*}
    \sum{\vphantom{\sum}}'' |e_{\wp_{\scaleto{1}{3pt}} \cdots \wp_{\scaleto{u}{3pt}}}(x)| & \ll_h \frac{1}{x^{(1 - \xi)/h}}  \left( \sum_{N(\wp) \leq x^\frac{1}{h \log \log x} } \frac{1}{N(\wp)^{\xi}} \right)^u 
    \ll_h \frac{1}{x^{((1-\xi)/h) - \epsilon'}},
\end{align*}
for any small $\epsilon' > 0$. Since, $x^{-(((1-\xi)/h) - \epsilon')} = o \left( (\log \log x)^{-r/2} \right)$, thus Condition (f) holds true as well. Since all the conditions of \thmref{yurugen} hold true with $\mathcal{S} = \mathcal{N}_h$, $\beta=1/h$ and $y = x^{1/(h \log \log x)}$, thus applying \thmref{yurugen} completes the proof.
\end{proof}

In this work, we establish that $\omega(\mathfrak{m})$ satisfies the Gaussian distribution over any subset of the set of ideals in a number field satisfying certain additional conditions. In particular, we establish that the set of $h$-free ideals and the set of $h$-full ideals satisfy these conditions and thus $\omega(\mathfrak{m})$ satisfies the Gaussian distribution over these subsets. In addition, we can generalize the Erd\H{o}s-Kac theorem to any subset of any abelian monoid that satisfies certain additional conditions similar to the number field case. We will report this result in a follow-up article.

\section{Acknowledgement}

We note that Section 4 of this manuscript is based on the work of the third author carried out during her Ph.D. studies under the supervision of Barry C. Mazur. Therefore, the authors would like to thank Prof. Mazur for his guidance and motivation.

\bibliographystyle{plain} 
\bibliography{mybib.bib} 

\end{document}